\newtheorem{thm}{Theorem}[section]
\newtheorem{defn}[thm]{Definition}
\newtheorem{prop}[thm]{Proposition}
\newtheorem{lem}[thm]{Lemma}
\newtheorem{rem}[thm]{Remark}
\newtheorem{cor}[thm]{Corollary}
\numberwithin{equation}{section}
\newcommand{\ldue}[1]{\ensuremath L^2([0,T],#1)}
\newcommand{\wto}{\ensuremath \rightharpoonup}
\newcommand{\palla}{\ensuremath \bar{B}_{R,m}}
\global\long\def\P{\mathbb{P}_{m}}%
\renewcommand{\Re}{\mathfrak{Re}}
\renewcommand{\d}[1]{\ensuremath \operatorname{d}#1}
\begin{document}

\title{Solutions to nonlocal evolution equations governed by non-autonomous forms and demicontinuous nonlinearities}
\titlerunning{Solutions to nonlocal evolution equations}

%
\author{Vittorio Colao        \and
        Luigi Muglia 
}


\institute{Department of Mathematics and Computer Science, UNICAL, Rende (CS), Italy\\
\email {(V. Colao) vittorio.colao@unical.it \and (L. Muglia) muglia@mat.unical.it}
}
\smartqed

\maketitle

\begin{abstract}
We deal with the existence of solutions having $L^2-$regulari\-ty for a class of  non-autonomous evolution equations. Associated with the equation, a general non-local condition is studied. The technique we used combines a finite dimensional reduction together with the Leray-Schauder continuation principle. This approach permits to consider a wide class of nonlinear terms by allowing demicontinuity assumptions on the nonlinearity.
\end{abstract}
\keywords{Non-autonomous Evolution Equation,  Accretive operator, Fixed point, Evolution System.}
\subclass{35R09, 34B10, 34C25, 47H11}
\section{Introduction and preliminaries.}
In this paper we investigate the existence and regularity properties of the solutions to the evolution problem
 \begin{eqnarray}\label{mainproblem}
		\begin{cases}
			u'(t)+A(t)u(t)= f(t,u(t)), \qquad a.e. \ \ t\in[0,T]\ \\
			u(0)= g(u)
		\end{cases}
\end{eqnarray}
where $A(t)$ is a non-autonomous operator having $L^2-$maximal regularity, $f$ is nonlinear and demicontinuous  in the second variable and $g$ is a function representing a non-local condition.\\
Maximal regularity results have been the subject of several recent studies. On one side, this is due to the fact that the maximal regularity property of the solutions of differential equations gains a central role in the theory of parabolic problems, by permitting weaker requests on the regularity of the coefficients of differential operators. On the other hand, from an abstract point of view, maximal regularity characterizes a wide class of evolution equations and facilitates the application of linearization techniques.\\
This  property plays an essential part indeed, together with a finite dimensional reduction technique and the Leray-Schauder continuation principle, permit us to  prove the existence and regularity properties of the solutions to a wide class of non-autonomous semilinear evolution equations, including demicontinuous nonlinearities. \\
Turning our attention to the trace space, we should note that while in the autonomous case, the theory is well-developed and it is well known in the literature which spaces must be chosen in order to reach the desired regularity of the solutions, the non-autonomous setting had been the subject of several recent research papers.\\
The evolution problem is then completed by an initial condition of non-local type; it is remarkable that non-local conditions can be more useful than standard initial conditions $u(0)=u_0$ to model some physical phenomena (see \cite{KoPr,Nto} and references therein).\\
To be more precise, let $(V,\langle \cdot, \cdot \rangle_V)$ and $(H,\langle \cdot, \cdot \rangle_H)$ be two separable Hilbert spaces such that $V$ is continuously and densely embedded into $H$, i.e. $V$ is a dense subspace of $H$  such that
\[
	\| v\|_H \leq c \| v \|_V
\]
for some constant $c>0$ and any $v\in V.$
In many practical examples, an operator $\mathcal A (t)$ is associated to a bounded sesquilinear form $a(t,\cdot,\cdot)$ with domain $V$.\\
More precisely, assume that $a:[0,T]\times V \times V \to \mathbb{C}$ satisfies
\begin{itemize}
\item [(H1)] $a(\cdot,u,v)$ is strongly measurable for any $u,v\in V,$
\item [(H2)] there exists $M>0$ such that $|a(t,u,v)|\leq M \| u \|_V \|v\|_V$ for any $t\in [0,T]$ and $u,v\in V,$
\item [(H3)] there exists $\alpha>0$ such that $\Re( a(t,u,u) ) \geq \alpha \|u\|^2_V$ for any $t\in [0,T]$ and $u\in V;$
\end{itemize}
then, for any $t\in [0,T],$ $\mathcal{A}(t)\in\mathcal{L}(V,V')$ is well-defined by $\langle \mathcal{A}(t) u, v\rangle_{V' \times V} := a(t,u,v) $ and $\mathcal{D}(\mathcal{A}(t))=V,$ where $\langle \cdot , \cdot \rangle_{V' \times V}$ is the standard duality pairing.\\
Turning our attention to evolution problems governed by forms,
we mention the following result proved by J.L. Lions in 1961  (see \cite{Li}):

\begin{thm} For any fixed $x\in H$ and $f\in \ldue{V'},$ the problem
\begin{eqnarray}
		\begin{cases}
			u'(t)+\mathcal{A}(t)u(t)= f(t), \qquad a.e. \ \ t\in[0,T]\ \\
			u(0)=x,
		\end{cases}
\end{eqnarray}
has a unique solution $u\in\ldue{V}\cap  H^1([0,T],V').$
\end{thm}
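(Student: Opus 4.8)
The plan is to prove this classical Lions theorem via the Galerkin method combined with energy estimates. First I would set up a finite-dimensional approximation: since $V$ is separable, choose a sequence $(w_k)_{k\geq1}$ whose finite linear span is dense in $V$ (and hence in $H$), let $V_n = \operatorname{span}\{w_1,\dots,w_n\}$, and project the initial datum $x$ onto $V_n$. On each $V_n$ the equation $u_n'(t)+\mathcal{A}(t)u_n(t)=f(t)$ tested against the basis elements becomes a linear system of ODEs with $L^2$-in-time coefficients; by Carathéodory's existence theorem this yields an absolutely continuous approximate solution $u_n(t)=\sum_{k=1}^n c_k^n(t)w_k$ satisfying $\langle u_n'(t),w_j\rangle_H + a(t,u_n(t),w_j) = \langle f(t),w_j\rangle_{V'\times V}$ for each $j\leq n$ and $u_n(0)=x_n$, the projection of $x$.

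The core of the argument is the energy estimate. Testing the approximate equation with $u_n(t)$ itself and taking real parts, I would use $\Re\langle u_n'(t),u_n(t)\rangle_H = \tfrac12\frac{d}{dt}\|u_n(t)\|_H^2$ together with the coercivity hypothesis (H3) to obtain
\begin{equation}
\frac12\frac{d}{dt}\|u_n(t)\|_H^2 + \alpha\|u_n(t)\|_V^2 \leq \Re\langle f(t),u_n(t)\rangle_{V'\times V}.
\end{equation}
Bounding the right-hand side by Young's inequality as $\|f(t)\|_{V'}\|u_n(t)\|_V \leq \tfrac{1}{2\alpha}\|f(t)\|_{V'}^2 + \tfrac{\alpha}{2}\|u_n(t)\|_V^2$ and integrating over $[0,t]$, I would absorb one coercivity term and conclude that $(u_n)$ is bounded in $\ldue{V}$ and in $L^\infty([0,T],H)$, with the bound controlled by $\|x\|_H$ and $\|f\|_{\ldue{V'}}$. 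Using (H2) this bound transfers to $(\mathcal{A}(\cdot)u_n(\cdot))$ in $\ldue{V'}$, and since $u_n' = f - \mathcal{A}(\cdot)u_n$ on the tested components, a careful argument gives a bound on $(u_n')$ in $\ldue{V'}$.

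Next I would extract, by reflexivity and the Banach--Alaoglu theorem, a subsequence with $u_n \wto u$ in $\ldue{V}$ and $u_n' \wto u'$ in $\ldue{V'}$, so that $u \in \ldue{V}\cap H^1([0,T],V')$. Passing to the weak limit in the Galerkin equations tested against a fixed $w_j$ (and then against time-dependent test functions by density) shows that $u$ solves the equation in $\ldue{V'}$. To recover the initial condition I would invoke the standard trace result that $\ldue{V}\cap H^1([0,T],V') \hookrightarrow C([0,T],H)$, so $u(0)$ is meaningful, and identify $u(0)=x$ by integrating by parts against a test function that does not vanish at $t=0$ and comparing with the known convergence $x_n\to x$.

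Finally, uniqueness follows by linearity: if $u_1,u_2$ are two solutions, their difference $w=u_1-u_2$ solves the homogeneous equation with $w(0)=0$, and the energy identity $\tfrac12\frac{d}{dt}\|w(t)\|_H^2 + \Re\, a(t,w,w)=0$ together with (H3) forces $\|w(t)\|_H\equiv 0$. I expect the main obstacle to be the rigorous derivation of the $\ldue{V'}$ bound on $u_n'$ and the legitimate passage to the limit in the nonautonomous form $a(t,\cdot,\cdot)$: because the coefficients depend only measurably on $t$ (hypothesis (H1)), one cannot use continuity in time and must instead argue by weak convergence tested against fixed spatial elements and Lebesgue-dominated convergence in the time variable, taking care that the measurability and uniform boundedness (H2) justify interchanging limits.
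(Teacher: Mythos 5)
The paper does not prove this statement: it is quoted as Lions' 1961 theorem and referred to \cite{Li}, so there is no internal proof to compare against. Your Galerkin-plus-energy-estimate argument is the standard (essentially Lions' original) proof, and its outline is correct: coercivity (H3) and Young's inequality give the uniform bound in $\ldue{V}\cap L^\infty([0,T],H)$, boundedness (H2) controls $\mathcal{A}(\cdot)u_n(\cdot)$ in $\ldue{V'}$, weak compactness plus the trace embedding $\ldue{V}\cap H^1([0,T],V')\hookrightarrow C([0,T],H)$ handles the limit and the initial condition, and the energy identity gives uniqueness. You correctly flag the one genuinely delicate step, the uniform $\ldue{V'}$ bound on $u_n'$: as written, testing only recovers the $H$-orthogonal projection of $u_n'$ onto $V_n$, and bounding it in $V'$ uniformly requires the projections to be uniformly bounded on $V$, which forces a special choice of basis (e.g.\ one orthogonal in both $H$ and $V$). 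A cleaner route, and the one closer to Lions' own, avoids estimating $u_n'$ altogether: pass to the weak limit $u_n\wto u$ in $\ldue{V}$ first, verify the equation against tensor-product test functions $\varphi(t)w_j$, and only then read off $u'=f-\mathcal{A}(\cdot)u(\cdot)\in\ldue{V'}$ a posteriori from (H2). Either way the argument closes; note also that measurability in $t$ (H1) causes no trouble in the limit passage since $t\mapsto a(t,w_k,w_j)$ enters only as a fixed bounded measurable multiplier.
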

We point out that the outstanding result by Lions only requires the measurability of $a(\cdot,u,v)$. On the other side, the above result is not fully satisfactory when applied to boundary value problems. Indeed, it is the part of $\mathcal{A}(t)$ which lies in $H$ that realizes the boundary conditions.\\
Therefore, let $A(t),\ t\in [0,T]$ be defined by $A(t) u:=\mathcal A (t) u,$ on the non-empty set $\mathcal{D}(A(t)):=\{u\in V: \mathcal A (t) u \in H\}$ and focus on
\begin{eqnarray}\label{PNA}
		\begin{cases}
			u'(t)+{A}(t)u(t)= f(t), \qquad a.e. \ \ t\in[0,T]\ \\
			u(0)=x.
		\end{cases}
\end{eqnarray}
\begin{defn}
For fixed $x$ in a suitable trace space, $\eqref{PNA}$ is said to have $L^2-$maximal regularity in $H$ if for any $f\in\ldue{H}$ there exists a unique solution $u\in H^1 ([0,T],H) \cap \ldue{V}$ and such that $u(t)\in \mathcal{D}(A(t)).$
\end{defn}
Several authors dealt with the problem of establishing which conditions on the trace space and which regularity assumptions on $a(t,u,v)$ are sufficient to achieve maximal regularity in $H.$ Among others, we cite the papers \cite{ACFP,A} and \cite{Pru}.\\
In 2015, Haak and Ouhabaz \cite{HaOu} gave a complete treatment of the evolution problem with non-zero initial values. We will mention their general result which we restate to our scope.\\
To this end, note that condition $(H3)$ readily implies that $A(t)$ is accretive for any $t\in [0,T],$ i.e.
\[ \Re \langle A(t) u, u \rangle_H \geq 0 \text{ for any } u\in \mathcal{D}(A(t)). \]  Secondly, assume that $a(t,u,v)$ also satisfies
\begin{itemize}
\item[(H4)] $|a(t,u,v)-a(s,u,v)|\leq \omega(|t-s|) \| u \|_V \| v \|_V,$ for some nondecreasing $\omega:[0,T]\to [0,+\infty)$ which satisfies
\[
\int_0^T  \frac{\omega(t)}{t^{3/2}} dt < \infty \text{ and the Dini condition } \int_0^T \left ( \frac{\omega(t)}{t} \right )^2 dt < \infty.
\]

\end{itemize}

\begin{thm}\cite[Corollary 3]{HaOu}\label{cor3}
Suppose that (H1)-(H4) and $\mathcal{D}(A(0)^{1/2})=V$ are satisfied, then for any $x\in V$ and any $f\in \ldue{H},$ \eqref{PNA} has $L^2-$maximal regularity in $H.$ Moreover there exists a constant $C_0>0$ such that
\begin{equation}\label{stimamaxreg}
\| u \|_{H^1 ([0,T],H)} + \| A(\cdot) u(\cdot) \|_{\ldue{H}} \leq C_0 \left ( \| f \|_{\ldue{H}} + \| x \|_V \right ).
\end{equation}
\end{thm}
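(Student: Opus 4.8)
The plan is to read off uniqueness from the coercivity estimate and then to obtain the regularity statement as an a priori estimate, by representing the solution through the evolution family generated by the $A(t)$, comparing it against the frozen analytic semigroups $e^{-\tau A(s)}$, and absorbing the non-autonomy into singular integral operators on $\ldue{H}$ whose $L^2$-boundedness is guaranteed precisely by the two integrability conditions in (H4). Uniqueness comes first and is elementary: if $u_1,u_2$ are two solutions in the maximal-regularity class then $w=u_1-u_2$ solves $w'+A(t)w=0$ with $w(0)=0$, and since $w'\in\ldue{H}$ we may pair with $w(t)$ in $H$ and take real parts, obtaining from (H3)
\[
\tfrac{1}{2}\frac{d}{dt}\|w(t)\|_H^2 + \alpha\|w(t)\|_V^2 \le \tfrac{1}{2}\frac{d}{dt}\|w(t)\|_H^2 + \Re\langle A(t)w(t),w(t)\rangle_H = 0 ,
\]
so $t\mapsto\|w(t)\|_H$ is nonincreasing and $w\equiv 0$. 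A weak solution in $\ldue{V}\cap H^1([0,T],V')$ is already furnished by Lions' theorem applied with $f\in\ldue{H}\subset\ldue{V'}$; hence the entire content of the statement is the a priori bound \eqref{stimamaxreg}, which upgrades this weak solution to $H^1([0,T],H)\cap\ldue{V}$ with values in $\mathcal D(A(t))$.

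For the building blocks I would first record the autonomous theory. Conditions (H2)-(H3) make each $A(s)$ sectorial with sector and constants independent of $s$, so $-A(s)$ generates a bounded analytic semigroup on $H$ with the uniform smoothing bounds $\|A(s)e^{-\tau A(s)}\|_{\mathcal L(H)}\lesssim \tau^{-1}$ and $\|A(s)^{1/2}e^{-\tau A(s)}\|_{\mathcal L(H)}\lesssim \tau^{-1/2}$. Because $H$ is a Hilbert space, de Simon's theorem gives autonomous $L^2$-maximal regularity for every frozen operator, i.e. $f\mapsto A(s)\int_0^{\cdot} e^{-(\cdot-r)A(s)}f(r)\,dr$ is bounded on $\ldue{H}$ uniformly in $s$; this is the only place where the Hilbert-space/$L^2$ framework is genuinely used, and it here replaces the much harder $R$-boundedness arguments of the $L^p$ theory. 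I would also need that the square-root identification propagates: $\mathcal D(A(0)^{1/2})=V$ together with the Hölder bound (H4) yields $\mathcal D(A(t)^{1/2})=V$ for every $t$ with $\|A(t)^{1/2}v\|_H\simeq\|v\|_V$ uniformly, so that the initial datum $x\in V$ may be read as $x=A(0)^{-1/2}y$ with $\|y\|_H\simeq\|x\|_V$.

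The heart of the argument is the construction and estimation of the evolution family $U(t,s)$ by the parametrix method: one sets $U(t,s)=e^{-(t-s)A(s)}$ plus a correction obtained by solving the Volterra equation with kernel $R(t,s)=\bigl(A(s)-A(t)\bigr)e^{-(t-s)A(s)}$, which, after navigating the Gelfand triple $V\hookrightarrow H\hookrightarrow V'$, is shown by means of (H4) and the smoothing bounds above to satisfy $\|R(t,s)\|_{\mathcal L(H)}\lesssim \omega(t-s)/(t-s)$. Writing $u(t)=U(t,0)x+\int_0^t U(t,s)f(s)\,ds$ and applying $A(t)$, one splits $A(t)u(t)$ into the frozen principal term $A(t)\int_0^t e^{-(t-s)A(t)}f(s)\,ds$, which is bounded on $\ldue{H}$ by de Simon, plus error operators whose kernels are products of the smoothing bounds with $R$. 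The forcing error is a convolution-type singular integral whose boundedness on $\ldue{H}$ is exactly the content of the Dini condition $\int_0^T(\omega(t)/t)^2\,dt<\infty$ via a square-function estimate, while the initial-datum term $A(t)U(t,0)A(0)^{-1/2}y$ carries an extra half-derivative and yields a kernel controlled by $\omega(t-s)/(t-s)^{3/2}$, whose $L^2$-mapping property is secured by $\int_0^T \omega(t)\,t^{-3/2}\,dt<\infty$ together with a Schur test. Collecting the two families of bounds produces \eqref{stimamaxreg}, the $\|f\|_{\ldue{H}}$ contribution coming from the forcing terms and the $\|x\|_V$ contribution from the initial-datum term.

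The main obstacle is the error-operator analysis of the previous step: establishing the precise kernel bounds for $R$ and its compositions (which requires careful interpolation between $V$, $H$ and $V'$, since the form difference is naturally $V\to V'$ bounded by $\omega$ and must be relocated to $H$ using the half-order smoothing of the semigroups), and, above all, proving the $L^2(H)\to L^2(H)$ boundedness of the resulting operators from the two hypotheses in (H4). Everything else — uniqueness, uniform sectoriality, de Simon's theorem, and the parametrix bookkeeping — is soft once these singular-integral estimates are in hand; the delicate point is that (H4) is calibrated so that the off-diagonal parts of the error kernels are integrable in the time-lag variable in the precise $L^1$ and quadratic senses demanded by Young's inequality and the square-function estimate, which is exactly why both the $t^{-3/2}$ integrability and the Dini-type $L^2$ integrability are imposed simultaneously.
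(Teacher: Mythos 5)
The paper does not prove this statement: it is quoted (up to notation) from Haak and Ouhabaz \cite[Corollary 3]{HaOu} and used as a black box, so there is no internal proof to compare yours against. Measured against the actual argument in that reference, your outline is faithful in its broad strokes: the proof there does reduce to an a priori estimate, does exploit the autonomous $L^2$-maximal regularity of each frozen sectorial operator $A(s)$ uniformly in $s$ (de Simon's theorem, the one genuinely Hilbertian ingredient), and does control the non-autonomous correction by singular-integral and square-function estimates on $\ldue{H}$ whose convergence is exactly what the two integrability conditions in (H4) encode, with the $t^{-3/2}$ condition serving the initial-datum term through $\mathcal{D}(A(0)^{1/2})=V$.

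That said, what you have written is an outline rather than a proof. The entire analytic content --- relocating $A(t)-A(s)$ from $\mathcal{L}(V,V')$ to bounds usable in $H$ via half-order smoothing, the precise kernel estimates for $R(t,s)$ and its Volterra iterates, and the $L^2\to L^2$ boundedness of the resulting error operators --- is labelled ``the main obstacle'' and deferred; none of it is carried out, and that is precisely where the theorem lives (your claimed bound $\|R(t,s)\|_{\mathcal{L}(H)}\lesssim \omega(t-s)/(t-s)$, for instance, cannot be read off directly, since $A(s)-A(t)$ only maps $V$ into $V'$). One side assertion is also unjustified and, to my knowledge, false in this generality: that $\mathcal{D}(A(0)^{1/2})=V$ together with (H4) forces $\mathcal{D}(A(t)^{1/2})=V$ for every $t$. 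The Kato square root property is not stable under such perturbations of the form; this is exactly why the present paper must impose (S) as a separate hypothesis and can only relax it to the one-point condition (S$^*$) under the stronger structural assumption (H4$^*$) via Arendt--Monniaux. Your argument only ever uses the property at $t=0$, so the overclaim is not load-bearing, but it should be removed.
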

To our scope we need to introduce a stronger condition on the domain, that is we assume that 
\begin{itemize}
	\item[(S)]  for any $t\in[0,T],$ the square root property $\mathcal{D}(A(t)^{1/2}) = V$  holds.
\end{itemize}
Three remarkable facts are detailed below. The first of them shows an immediate consequence of the above estimate, while the others two illustrate settings in which conditions (H3) and (S) can be lowered.
\begin{rem}\label{rem4} We stress that condition (H3) guarantees that the solution $u$ lies in $\ldue{V}.$  More precisely, there exists $C_1>0$ such that
\begin{eqnarray}\label{stimamaxreg2}
\nonumber \| u \|_{H^1 ([0,T],H)} + \| u \|_{\ldue{V}} \\
+ \| A(\cdot) u(\cdot) \|_{\ldue{H}} & \leq & C_1 \left ( \| f \|_{\ldue{H}} + \| x \|_V \right ).
\end{eqnarray}
Indeed, observe that
\begin{eqnarray*}
\alpha \| u(t) \|^2_V && \leq \Re \left ( a(t,u(t),u(t)) \right ) = \Re  \langle A(t) u(t), u(t) \rangle_H \\
\ && \leq \frac 1 2 \left ( \| u(t) \|^2_H + \|A(t)u(t)\|^2_H \right ),
\end{eqnarray*}
which implies, by means of \eqref{stimamaxreg} that
\begin{eqnarray*}
\|u \|^2_{\ldue{V}} &\leq& \frac 1 {2\alpha} \left ( \| u \|^2_{\ldue{H}} + \|A(t)u(t)\|^2_{\ldue{H}} \right )\\
&& \frac {C_0^2} {2\alpha} \left ( \| f \|_{\ldue{H}} + \| x \|_V \right )^2.
\end{eqnarray*}
Hence \eqref{stimamaxreg2} follows by setting $C_1:= C_0 \left ( 1 + \left ( \frac 1 {2\alpha} \right )^{\frac 1 2}\right ).$
\end{rem}
\begin{rem}\label{trucco}
	Let $\mu>0$ and set $v(t):=e^{-\mu t} u(t).$ Then, if $u$ satisfies problem \eqref{PNA}, then $v$ is the unique solution of
	\begin{eqnarray*}
		\begin{cases}
			v'(t)+{A}(t)v(t) + \mu v(t) =g(t), \qquad a.e. \ \ t\in[0,T]\ \\
			v(0)=u(0),
		\end{cases}
	\end{eqnarray*}

	where $g(t):=e^{-\mu t} f(t).$ \\ This shows that condition (H3) can be lowered by assuming that
	\begin{itemize}
		\item [(H3$^*$)] there exists $\alpha,\delta>0$ such that \[ \delta \| u \|^2_H + \Re( a(t,u,u) ) \geq \alpha \|u\|^2_V  \] holds for any $t\in [0,T]$ and $u\in V.$
	\end{itemize}
\end{rem}
\begin{rem}
	Assume that
	\begin{itemize}
		\item[(H4$^*$)]$|a(t,u,v)-a(s,u,v)|\leq \omega(|t-s|) \| u \|_V \| v \|_{V_\gamma}$
	\end{itemize}
	holds for some bounded $\omega$ and  where $V_\gamma$ is the complex interpolation space $[H,V]_\gamma$ for a fixed $\gamma\in (0,1).$ Then, it follows from \cite[Proposition 2.5]{ArMon} that property (S) can be relaxed by assuming
	\begin{itemize}
		\item[(S$^*$)]  $\mathcal{D}(A(t_0)^{1/2}) = V$ for some $t_0\in[0,T].$
	\end{itemize}
\end{rem}

Based on Theorem \ref{cor3} and by closely following the lines provided by \cite{ACFP}, in Section \ref{sez2} we construct a strongly continuous evolution family and introduce some basic properties which will be crucial in the sequel.\\
In several recent papers (see e.g. \cite{BeLoMaTa,LuLiOb,ZhLiXi} and references therein), a final dimensional reduction strategy had been proved to be a key point when associated to the study of autonomous evolution problems. In particular a new method has been introduced in  \cite{BLT} which combines Yosida approximation with the approximation solvability method. The aforementioned method is a generalization of the well known strong approximation technique (\cite{Bro,Pe}) and has been used in \cite{Loi} for studying periodic oscillations, while in \cite{BLMO} it has been applied to nonlocal differential equation problems in  Banach spaces. 
In this article, we show that the technique also fits well in the non-autonomous case and, as a novelty element, we combine the approximation solvability method with the approximation properties of evolution systems governed by forms. Section \ref{sez3} is devoted to the scope.\\ Indeed, we prove that Theorem  \ref{cor3} also applies to evolution  equations governed by operator of type $\P A(t),$ where $\P$ is the projection onto a finite dimensional space (Lemma \ref{Am}).  An approximation result is also introduced to show the uniform convergence of the evolution systems generated by $\P A(t)$ (Lemma \ref{lem:wmconverge}).\\
The above-mentioned approach permits us to lower the hypothesis on the nonlinear term, as explained below.\\
Let $X$ and $Y$ be Banach spaces and let  $f:[0,T]\times X \to Y$ be a map. An important concept in Functional Analysis is the one of superposition operator $N_f :L^p ([0,T],X) \to L^q([0,T],Y)$, defined by $N_f (u) (t) := f(t, u(t)).$
We recall the following classical theorem:
\begin{thm}[\cite{LuPa}]\label{LupA}
If $X$ and $Y$ are separable and $f$ is measurable in $[0,T]\times X,$ then $N_f:L^p ([0,T],X) \to L^q([0,T],Y)$ is well-defined if and only if
there exists a constant $a>0$ and a function $b\in L^q ([0,T],\mathbb{R}_+)$ such that
\begin{equation}\label{subl}
\| f(t,x) \|_Y \leq a \| x \|_X^{p/q} + b(t).
\end{equation}
Moreover $N_f$ maps bounded subsets into bounded subsets.
\end{thm}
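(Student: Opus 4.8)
The plan is to prove the two implications separately and then read the boundedness assertion off the sufficiency estimate. Throughout I rely on the joint measurability of $f$ and the separability of $X,Y$.

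\emph{Measurability and sufficiency.} First I would record that for strongly measurable $u$ the map $t\mapsto N_f(u)(t)=f(t,u(t))$ is strongly measurable: since $X$ is separable, $t\mapsto(t,u(t))$ is measurable into $[0,T]\times X$; composing with the jointly measurable $f$ gives measurability, and separability of $Y$ together with the Pettis measurability theorem upgrades this to strong measurability. Granting the growth bound \eqref{subl}, for $u\in L^p([0,T],X)$ the elementary inequality $(s+\sigma)^q\le 2^{q-1}(s^q+\sigma^q)$ (valid for $q\ge1$) yields $\|N_f(u)(t)\|_Y^q\le 2^{q-1}(a^q\|u(t)\|_X^p+b(t)^q)$, and integrating over $[0,T]$ shows $N_f(u)\in L^q$ with $\|N_f(u)\|_{L^q}^q\le 2^{q-1}(a^q\|u\|_{L^p}^p+\|b\|_{L^q}^q)$. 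Since the right-hand side depends on $u$ only through $\|u\|_{L^p}$, this simultaneously proves the ``if'' part and the final assertion that $N_f$ maps bounded sets into bounded sets.

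\emph{Necessity, setup.} For the converse I would argue by contraposition, constructing $u\in L^p$ with $N_f(u)\notin L^q$ whenever \eqref{subl} fails. Introduce $\Phi(t,r):=\sup\{\|f(t,x)\|_Y:\|x\|_X\le r\}$; because a separable Banach space is Polish and Lebesgue measure is complete, the measurable projection theorem makes $t\mapsto\Phi(t,r)$ measurable, and the monotonicity of $\Phi(t,\cdot)$ reduces the relevant suprema to countable ones, so that $b_n(t):=\sup_{r\ge0}(\Phi(t,r)-n r^{p/q})_+$ and its truncations $b_n^{(j)}(t):=\sup_{0\le r\le j}(\Phi(t,r)-n r^{p/q})_+$ are measurable (after discarding the null set where $\Phi(t,\cdot)\equiv+\infty$). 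The negation of \eqref{subl} is precisely that $b_n\notin L^q$ for every $n\in\mathbb{N}$, and by monotone convergence $\int_0^T (b_n^{(j)})^q\uparrow\int_0^T b_n^q=\infty$, so each $b_n^{(j)}$ can be made to carry arbitrarily large $L^q$-mass using only bounded radii.

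\emph{Selection and gliding hump.} The heart of the argument is a gliding-hump construction. For each $n$ I would apply the von Neumann--Aumann selection theorem to the measurable-graph multifunction $t\mapsto\{x:\|x\|_X\le j_n,\ \|f(t,x)\|_Y\ge b_n^{(j_n)}(t)-\tfrac1n\}$, obtaining a measurable $w_n$ with $\|w_n(t)\|_X\le j_n$ and $\|f(t,w_n(t))\|_Y\ge b_n^{(j_n)}(t)-\tfrac1n$; the very definition of $b_n$ also forces $\|f(t,w_n(t))\|_Y\ge n\|w_n(t)\|_X^{p/q}$, whence the crucial cost bound $\|w_n(t)\|_X^p\le n^{-q}\|f(t,w_n(t))\|_Y^q$. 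I would then carve out pairwise disjoint sets $F_n$, set $u:=\sum_n w_n\mathbf{1}_{F_n}$, and use the nonatomicity of Lebesgue measure to choose the $F_n$ so that $\int_{F_n}\|f(t,w_n)\|_Y^q=\varepsilon_n$ with $\varepsilon_n=n^{-\theta}$ for a fixed $\theta\in(\max\{0,1-q\},1]$. Then $\sum_n\varepsilon_n=\infty$ forces $N_f(u)\notin L^q$, while $\int_{F_n}\|u\|_X^p\le n^{-q}\varepsilon_n=n^{-q-\theta}$ is summable, so $u\in L^p$, giving the desired contradiction.

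\emph{Main obstacle.} The delicate point is the bookkeeping in this last step: one must guarantee that the $F_n$ remain disjoint and of small total measure while each still captures the prescribed mass $\varepsilon_n$, even though $\int_0^T(b_n^{(j_n)})^q$ might a priori be concentrated on the portion already consumed by earlier stages. I expect to resolve this by first intersecting with level sets $\{b_n^{(j_n)}\le L_n\}$, on which the integrand is bounded so that removing a small-measure set removes only a small amount of mass, and then exhausting $[0,T]$ through a careful choice of the parameters $j_n$, $L_n$ and $|F_n|\le 2^{-n}$. The remaining verifications (joint measurability of the composition, measurability of $\Phi$, and existence of the selections) are routine applications of the cited projection and selection theorems.
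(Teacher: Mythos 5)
A point of order first: the paper does not prove this statement at all --- it is quoted from Lucchetti and Patrone \cite{LuPa} and used as a black box (only through Lemma \ref{weaktostrong}, with $p=q=2$) --- so there is no in-paper proof to measure your argument against. Judged on its own terms, your sufficiency half is correct and complete: joint measurability plus separability of $X$ and $Y$ gives strong measurability of $t\mapsto f(t,u(t))$, and the estimate $\|N_f(u)\|_{L^q}^q\le 2^{q-1}\bigl(a^q\|u\|_{L^p}^p+\|b\|_{L^q}^q\bigr)$ proves both the ``if'' direction and the final boundedness assertion.

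The necessity half has a genuine gap in two places. First, the selection is wrong as written: choosing $w_n(t)$ to nearly maximize $\|f(t,x)\|_Y$ over $\{\|x\|_X\le j_n\}$ does \emph{not} force $\|f(t,w_n(t))\|_Y\ge n\|w_n(t)\|_X^{p/q}$; a near-maximizer of $\|f(t,\cdot)\|_Y$ may sit where $\|f(t,x)\|_Y<n\|x\|_X^{p/q}$. You must select a near-maximizer of the quantity $\|f(t,x)\|_Y-n\|x\|_X^{p/q}$ itself (on the set where $b_n^{(j_n)}(t)\ge 2/n$, say); that version delivers both inequalities and hence the cost bound $\|w_n(t)\|_X^p\le n^{-q}\|f(t,w_n(t))\|_Y^q$. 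Second, the bookkeeping you flag as the main obstacle is not resolved by the fix you sketch: the parameters $j_n$ and $L_n$ of stage $n$ are not available when the earlier sets $F_1,\dots,F_{n-1}$ are fixed, so requiring $|F_i|\le 2^{-i}$ does not prevent $\int_{F_i}(b_n^{(j_n)})^q$ from absorbing essentially all of $\int_0^T(b_n^{(j_n)})^q$ for some later $n$ --- a set of tiny measure can carry almost all the mass of a function you have not yet chosen. Moreover $b_n$ can be identically $+\infty$ (e.g.\ when $\Phi(t,r)$ grows faster than $nr^{p/q}$ in $r$), so the claim that the residual set still carries infinite mass needs its own argument; the usual route is the monotonicity $b_n\le b_i$ for $n\ge i$ combined with confining $F_i$ to a level set of $b_i$ chosen \emph{at stage $i$}, plus a separate gliding hump on the set where every $b_i$ is infinite. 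These points are fixable, but they constitute the actual content of the hard direction, and the proposal currently asserts rather than proves them.
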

In classical examples on $L^p$ spaces, nonlinear superposition operators are often constructed  on Caratheodory maps $f:[0,T]\times X \to Y$, i.e. by assuming that $f$ is continuous w.r.t. the second variable. The continuity of the superposition operator $N_f$ is then guaranteed as a consequence of this fact.\\
We stress that the continuity of superposition operator is almost a necessary property in the applications to differential systems. Nevertheless, the use of weak topologies can often be more convenient in several practical problems. Unfortunately, \cite[Example 2.3]{MoTe} shows that weak continuity of superposition operator $N_f$ acting on $L^p$ spaces does not derive from the same assumption on the map $f(t,\cdot).$ \\
Therefore a stronger form of continuity on $f(t,\cdot)$ appears to be necessary in order for the superposition operator to continuously act on spaces endowed with the weak topology (see \cite[Theorem 2.6]{MoTe}).\\
In Section \ref{sez4}, we show that the operator $N_f$ is demicontinuous  or, in other words, that for any sequence $x_n \to x$  it holds $N_f (x_n) \wto N_f(x),$ whenever $f(t,\cdot)$ satisfies the same property (Lemma \ref{weaktostrong}).\\
Demicontinuity fits well with the finite dimensional reduction approach and permit us, under the further assumption that the map $f$ satisfies a transversality condition, to apply the Leray-Schauder continuation principle (see also \cite{BeCi}):

\begin{thm}[\cite{LeSc}]\label{LeSc}
	Let $D$ be a closed and convex set in a Banach space $X.$ Let $S:[0,1]\times D \to X$  be a completely continuous operator (i.e. $S$ maps bounded subsets into compact subsets and it is continuous) and assume that
	\begin{enumerate}[(i)]
		\item $S(0,x)\in \operatorname{int} (D)$ for any $x\in D;$
		\item the set \[ \{ x \in  D : x = S(\lambda, x) \text{ for some } \lambda \in [0,1] \} \] is bounded and does not meet the boundary $\partial D$ of $D.$
	\end{enumerate}
	Then there exists $x\in D$ such that $x=S(1,x).$
\end{thm}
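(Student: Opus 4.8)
The plan is to prove the statement by topological degree theory, using the Leray--Schauder degree for compact perturbations of the identity. The two hypotheses play complementary roles: condition (ii) guarantees that the degree is well defined and invariant along the homotopy parameter $\lambda$, while condition (i), together with the convexity of $D$, forces the degree to be nonzero at $\lambda=0$. From a nonzero degree at $\lambda=1$ one then reads off a zero of $I-S(1,\cdot)$, i.e. a fixed point of $S(1,\cdot)$ in $D$.

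First I would analyse the solution set $\Sigma := \{ x \in D : x = S(\lambda,x) \text{ for some } \lambda \in [0,1]\}$. By (ii) it is bounded and disjoint from $\partial D$, hence $\Sigma \subset \operatorname{int}(D)$. A routine argument using the complete continuity of $S$ shows that $\Sigma$ is compact: given $x_n = S(\lambda_n,x_n)$ in $\Sigma$, the sequence is bounded, so after passing to a subsequence $\lambda_n\to\lambda$ and $S(\lambda_n,x_n)$ converges, forcing $x_n\to y$ with $y=S(\lambda,y)\in\Sigma$. Since $\Sigma$ is compact and contained in the open set $\operatorname{int}(D)$, I can fix a bounded open set $U$ with $\Sigma\subset U$ and $\overline U\subset\operatorname{int}(D)$ (a small tubular neighbourhood of $\Sigma$). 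On $\overline U$ each $S(\lambda,\cdot)$ is completely continuous, and the homotopy $(\lambda,x)\mapsto S(\lambda,x)$ has no fixed point on $\partial U$, because any such fixed point would lie in $\Sigma\subset U$, disjoint from $\partial U$. Consequently $d(\lambda):=\deg(I-S(\lambda,\cdot),U,0)$ is well defined and, by homotopy invariance, constant in $\lambda$; it therefore suffices to show $d(0)\neq 0$.

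The computation $d(0)=1$ is the heart of the argument, and this is where (i) and convexity are used. I would first reduce to bounded $D$: since $S(0,\cdot)$ is completely continuous, $S(0,D\cap\overline{B_R})$ is bounded, so for $R$ large the set $\tilde D:=D\cap\overline{B_R}$ is closed, bounded and convex, still satisfies $S(0,\tilde D)\subset\operatorname{int}(\tilde D)$, and its solution set is again $\Sigma$, disjoint from $\partial\tilde D$. Assuming $D$ bounded, let $r:X\to D$ be a retraction onto the closed convex set $D$ (the nearest-point projection in the Hilbert setting, or Dugundji's extension in general) and set $F_0:=S(0,r(\cdot))$ on a ball $\overline B\supset D$. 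Then $F_0$ is completely continuous and maps $\overline B$ into $\operatorname{int}(D)\subset\overline B$, so Rothe's theorem gives $\deg(I-F_0,B,0)=1$. Every zero of $I-F_0$ satisfies $x=S(0,r(x))\in\operatorname{int}(D)$, hence $r(x)=x$ and $x\in\Sigma\subset U$; by excision, and since $F_0=S(0,\cdot)$ on $\overline U\subset D$, we obtain $d(0)=\deg(I-S(0,\cdot),U,0)=\deg(I-F_0,B,0)=1$. Therefore $d(1)=1\neq0$ and $S(1,\cdot)$ has a fixed point in $U\subset D$.

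I expect the delicate point to be precisely this last step. Neither condition (i) nor convexity alone pins down the degree on the \emph{artificial} boundary $\partial U$ of a neighbourhood of $\Sigma$; a map into the open convex set $\operatorname{int}(D)$ may perfectly well have fixed points on such a sphere. The value $1$ is obtained only by exploiting the inward-mapping property $S(0,D)\subset\operatorname{int}(D)$ \emph{globally}, on a ball containing $D$, through the retraction and Rothe's theorem, and then transporting the result back to $U$ by excision. The reduction to bounded $D$ and the verification that the retraction preserves complete continuity are the accompanying technical points.
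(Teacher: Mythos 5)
The paper offers no proof of this statement---it is quoted verbatim from Leray--Schauder---so there is no internal argument to compare yours against; I can only assess the proposal on its own terms. Its degree-theoretic skeleton is the standard one and is sound: $\Sigma$ is compact and contained in $\operatorname{int}(D)$, the degree $d(\lambda)=\deg(I-S(\lambda,\cdot),U,0)$ on a bounded open neighbourhood $U$ of $\Sigma$ with $\overline{U}\subset\operatorname{int}(D)$ is admissible and constant in $\lambda$, and when $D$ is \emph{bounded} your computation $d(0)=1$ via a retraction $r:X\to D$, Rothe's theorem on a large ball containing $D$, and excision is correct. (That computation also forces $\Sigma\neq\emptyset$, a case you should address explicitly, since an empty $\Sigma$ would give $U=\emptyset$ and $d(0)=0$.)

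The genuine gap is the reduction to bounded $D$. Complete continuity does not yield an $R$ with $S(0,D\cap\overline{B_R})\subset\operatorname{int}(D\cap\overline{B_R})$: the bound on $S(0,D\cap\overline{B_R})$ grows with $R$, and for $X=D=\mathbb{R}$, $S(0,x)=2x$ no radius works; there one computes $d(0)=\deg(x\mapsto x-2x,U,0)=-1$, so even the conclusion $d(0)=1$ of that step is false for unbounded $D$. Worse, no repair is possible at the stated level of generality, because the theorem itself fails without boundedness of $D$: take $D=X=\mathbb{R}^2$ and $S(\lambda,x)=x+e_1$. This $S$ is completely continuous, (i) holds since $\operatorname{int}(D)=X$, and (ii) holds because the solution set is empty and $\partial D=\emptyset$, yet $S(1,\cdot)$ has no fixed point. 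The hypothesis that $D$ is bounded (or some substitute guaranteeing $d(0)\neq0$) must be added; this is harmless for the paper, which applies the theorem only to the ball $\bar{B}_{R,m}$, but your proof should assume it from the outset and delete the reduction step rather than rely on it.
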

Finally, a solution to problem \eqref{mainproblem} is proved to exist as a limit. For this last step, the following result is crucial.
\begin{thm}[Aubin-Lions Lemma, see e.g. \cite{BoFa}]\label{aubinlions}
	Let $X_0,X$ and $X_1$ be three Banach spaces. Suppose that $X_0$ is compactly embedded in $X$ and $X$ is continuously embedded in $X_1$. Suppose also that $X_0$ and $X_1$ are reflexive. Then  for $0<T<+\infty$ and $1<r,s<\infty,$ we have that $L^r([0,T],X_0)\cap W^{1,s} ([0,T],X_1)$ is compactly embedded in $L^r([0,T],X).$
\end{thm}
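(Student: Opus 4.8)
The plan is to prove the statement in its equivalent sequential form: given a sequence $(u_n)$ bounded in $L^r([0,T],X_0)\cap W^{1,s}([0,T],X_1)$, I extract a subsequence converging strongly in $L^r([0,T],X)$. The driving tool is an Ehrling-type interpolation inequality: since $X_0$ embeds compactly in $X$ and $X$ embeds continuously in $X_1$, for every $\varepsilon>0$ there exists $C_\varepsilon>0$ with
\[
\|v\|_{X}\;\leq\;\varepsilon\,\|v\|_{X_0}+C_\varepsilon\,\|v\|_{X_1},\qquad v\in X_0 .
\]
I would first establish this by contradiction: were it to fail for some $\varepsilon_0$, one could find $v_k$ with $\|v_k\|_{X_0}=1$ and $\|v_k\|_X>\varepsilon_0+k\,\|v_k\|_{X_1}$; by compactness of $X_0\hookrightarrow X$ a subsequence converges in $X$ to some $v$ with $\|v\|_X\geq\varepsilon_0$, while the bound $k\|v_k\|_{X_1}<\|v_k\|_X\leq c$ forces $\|v_k\|_{X_1}\to 0$, hence $v=0$ in $X_1$ and so in $X$, a contradiction.

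Exploiting the reflexivity of $X_0$ and $X_1$ --- and therefore of the Bochner spaces $L^r([0,T],X_0)$ and $L^s([0,T],X_1)$ for $1<r,s<\infty$ --- I pass to a subsequence along which $u_n\wto u$ in $L^r([0,T],X_0)$ and $u_n'\wto u'$ in $L^s([0,T],X_1)$, the weak limit of the derivatives being the distributional derivative of $u$ (identified by testing against $\phi\in C_c^\infty(0,T)$). Writing $w_n:=u_n-u$, the family $\{w_n\}$ satisfies $w_n\wto 0$, stays bounded in $L^r([0,T],X_0)$, and has $\{w_n'\}$ bounded in $L^s([0,T],X_1)$. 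Integrating the Ehrling inequality over $[0,T]$ gives
\[
\|w_n\|_{L^r([0,T],X)}\;\leq\;\varepsilon\,\|w_n\|_{L^r([0,T],X_0)}+C_\varepsilon\,\|w_n\|_{L^r([0,T],X_1)}\;\leq\;\varepsilon M+C_\varepsilon\,\|w_n\|_{L^r([0,T],X_1)},
\]
with $M:=\sup_n\|w_n\|_{L^r([0,T],X_0)}$. The problem thus reduces to proving that $w_n\to 0$ \emph{strongly} in $L^r([0,T],X_1)$: granting this, letting $n\to\infty$ and then $\varepsilon\to 0$ yields $\|w_n\|_{L^r([0,T],X)}\to 0$, which is the assertion.

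This strong convergence in $L^r([0,T],X_1)$ is the heart of the matter, and I expect it to be the main obstacle, for it is exactly the step where the time regularity carried by the derivative bound must be converted into compactness in the space variable. I would obtain it from the vector-valued Fréchet--Kolmogorov compactness criterion in $L^r([0,T],X_1)$, verifying its two hypotheses. First, for $0\leq t_1<t_2\leq T$ the time averages satisfy $\bigl\|\int_{t_1}^{t_2}w_n\,dt\bigr\|_{X_0}\leq (t_2-t_1)^{1-1/r}\,\|w_n\|_{L^r([0,T],X_0)}\leq C$, so they lie in a fixed bounded subset of $X_0$, which is relatively compact in $X_1$ because the factorization $X_0\hookrightarrow X\hookrightarrow X_1$ makes $X_0\hookrightarrow X_1$ compact. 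Second, since $w_n\in C([0,T],X_1)$ with $w_n(t+h)-w_n(t)=\int_t^{t+h}w_n'(\sigma)\,d\sigma$, Hölder's inequality gives the uniform estimate $\|w_n(t+h)-w_n(t)\|_{X_1}\leq h^{1-1/s}\,\|w_n'\|_{L^s([0,T],X_1)}\leq C\,h^{1-1/s}$, so the time translates are equicontinuous in $L^r([0,T],X_1)$ uniformly in $n$. The criterion then yields that $\{w_n\}$ is relatively compact in $L^r([0,T],X_1)$; combined with $w_n\wto 0$, every strongly convergent subsequence must have limit $0$, and hence $w_n\to 0$ in $L^r([0,T],X_1)$.

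Combining the reduction of the second paragraph with the strong convergence just established, the subsequence under consideration converges strongly to $u$ in $L^r([0,T],X)$. Since an arbitrary bounded sequence in $L^r([0,T],X_0)\cap W^{1,s}([0,T],X_1)$ therefore admits a subsequence convergent in $L^r([0,T],X)$, the embedding is compact, as claimed.
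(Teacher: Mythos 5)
The paper does not prove this statement: it is quoted verbatim as a classical result with a pointer to the reference \cite{BoFa}, so there is no internal proof to compare against. Your argument is a correct, self-contained rendition of the standard proof (essentially the Ehrling--Simon route found in Boyer--Fabrie and in Simon's ``Compact sets in $L^p(0,T;B)$''): the Ehrling interpolation inequality is established correctly by contradiction, the reduction to strong convergence in $L^r([0,T],X_1)$ via the integrated inequality is sound, and the compactness in $L^r([0,T],X_1)$ is obtained from the two hypotheses of the vector-valued Fr\'echet--Kolmogorov/Simon criterion (relative compactness of the time averages, which you correctly place in a bounded subset of $X_0$ and hence in a compact subset of $X_1$ through the compact factorization $X_0\hookrightarrow X\hookrightarrow X_1$, together with the $h^{1-1/s}$ equicontinuity of translates coming from the derivative bound). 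The final identification of the strong limit with the weak limit $0$ by uniqueness of limits along subsequences is also correct. The only point worth flagging is that you lean on reflexivity to extract the weakly convergent subsequence defining $u$; this is harmless here since reflexivity of $X_0$ and $X_1$ is part of the hypotheses, although Simon's formulation shows the conclusion holds without it.
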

The results detailed above are contained in Section \ref{sez5}.\\
The  paper is then concluded by Section \ref{sez6}, where application examples are discussed.  In particular,  weexploit the fact the demicontinuity assumption is naturally satisfied by any maximally monotone operator with sublinear growth.\\
We stress that the novelty elements that we introduce in this manuscript  lie in the approximation results (Lemmas \ref{Am} and \ref{lem:wmconverge}), in Lemma \ref{weaktostrong} where the demicontinuity of the superposition operator is shown and, leastly, in Theorem \ref{main} we show the existence of solutions in $H^1 ([0,T],H) \cap \ldue{V}$ to the problem
 \begin{eqnarray}\label{PMR}
	\begin{cases}
		u'(t)+A(t)u(t)=f(t,u(t)) \qquad t\in[0,T]\ \\
		u(0)=g(u).
	\end{cases}
\end{eqnarray}
It is worth mentioning that another novelty element of our approach lies in the regularity properties of the solutions to the nonlocal semilinear problem. Indeed, it is importanto to note that the majority of the papers on the topic mainly deal with the existence of mild solutions, that is continuous solutions of an associated fixed-point problem. Here we prove that the aforementioned approximation solvability method can be a key-strategy also for proving the existence of strong solutions. 
\section{Evolution families for operators governed by forms.}\label{sez2}
In this section we study the properties of the evolution family generated by a non-autonomous form which satisfies properties (H1)-(H4) and (S).\\
Here and through the rest of the paper,   $(V,\langle \cdot, \cdot \rangle_V)$ and $(H,\langle \cdot, \cdot \rangle_H)$ will be two separable Hilbert spaces, with $V$ densely embedded into $H$ and $\Delta$ will denote the set $\{(t,s): 0\leq s < t \leq T\}.$
As usual, $L^2([0,T],H)$ is the Lebesgue space of all the square integrable $H-$valued functions, while $H^1([0,T],H)=W^{1,2}([0,T],H)$ is the corresponding Sobolev space.\\
We turn our attention to the construction of a suitable evolution family by introducing the next lemma, a direct consequence of  Theorem \ref{cor3}, which proof follows the line of Propositions $2.3$ and $2.4$ and Corollaries $3.4$ and $3.5$ from \cite{ACFP}. Our hypotheses are slightly different and a proof is provided for sake of completeness.
 Also we use $C\in(0,+\infty)$ to denote any limiting constant, agreeing since now to redefine it whenever it is needed. 
\begin{lem}\label{evolsystem}
Assume that  (H1)-(H4) and (S) are satisfied, then there exists a contractive and strongly continuous evolution family $\{E(t,s)\}_{(t,s)\in \Delta} \subset \mathcal{L} (H)$ such that
\begin{itemize}
\item[(i)] $u(t):=E(t,s)x$ is the unique solution in $H^1 ([s,T],H) \cap L^2{([s,T],V)}$ of the homogeneous problem
\begin{eqnarray}\label{PA}
		\begin{cases}
			u'(t)+{A}(t)u(t)= 0, \qquad a.e. \ \ t\in[s,T]\ \\
			u(s)=x\in V,
		\end{cases}
\end{eqnarray}
moreover it holds
\begin{equation}\label{stimaevolV}
\| u \|_{H^1 ([s,T],H)} + \| u \|_{L^2([s,T],V)} + \| A(\cdot) u(\cdot) \|_{L^2([s,T],H)}   \leq C \|x\|_V
\end{equation}
where $C>0$ is a constant;
\item[(ii)] $u(t):=E(t,0)x$ is the unique solution in $H^1_\text{loc} ((0,T],H) \cap L^2_\text{loc}((0,T],V)\cap C([0,T],H)$ of the homogeneous problem  \begin{eqnarray*}
	\begin{cases}
		u'(t)+{A}(t)u(t)= 0, \qquad a.e. \ \ t\in[0,T]\ \\
		u(0)=x\in H,
	\end{cases}
\end{eqnarray*} and it holds
\begin{equation}\label{stimaevol}
 \|v\|_{H^{1}([0,T],H)}+\|v\|_{\ldue{V}}
 +\|A(\cdot)v(\cdot)\|_{\ldue{H}}  \leq C\sqrt{T}\|x\|_{H},
\end{equation}
where $v(t):=tE(t,0)x.$
\item[(iii)] for any $x\in H$ and $f\in \ldue{H},$ the function
\begin{equation}\label{PNAsol}
E(t,0)x+\int_0^t E(t,s) f(s) ds
\end{equation}
is the unique solution in $H^1_\text{loc} ((0,T],H) \cap L^2_\text{loc}((0,T],V)\cap C([0,T],H)$ of \eqref{PNA}.
\item [(iv)] for any $x\in V$ and $f\in \ldue{H},$ the unique solution $u$ of \eqref{PNA} given by Theorem \ref{cor3} has a continuous representation as
\[
u(t) = E(t,0)x+\int_0^t E(t,s) f(s) ds.
\]
\end{itemize}
\end{lem}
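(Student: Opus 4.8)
The plan is to build the family $\{E(t,s)\}$ directly from Theorem \ref{cor3} and to propagate its properties through the four items by combining maximal-regularity estimates, the accretivity inherited from (H3), and uniqueness.

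First I would fix $s\in[0,T)$ and apply Theorem \ref{cor3} on the interval $[s,T]$ — the hypotheses (H1)--(H4) and (S) are uniform in $t$, so the square-root property holds in particular at the left endpoint $s$ — to the homogeneous problem \eqref{PA}. For $x\in V$ this produces a unique solution in $H^1([s,T],H)\cap L^2([s,T],V)$, and I would set $E(t,s)x:=u(t)$. Estimate \eqref{stimaevolV} then follows at once from the refined bound \eqref{stimamaxreg2} of Remark \ref{rem4} with $f=0$. To obtain contractivity I would take the $H$-inner product of \eqref{PA} with $u(t)$ and use the accretivity $\Re\langle A(t)u(t),u(t)\rangle_H\ge0$ to get $\tfrac12\tfrac{d}{dt}\|u(t)\|_H^2\le0$, whence $\|E(t,s)x\|_H\le\|x\|_H$; since $V$ is dense in $H$ this lets me extend $E(t,s)$ to a contraction on all of $H$.

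Next I would verify that $\{E(t,s)\}$ is an evolution family and is strongly continuous. The identity $E(t,s)E(s,r)=E(t,r)$ for $r\le s\le t$ is immediate from uniqueness in Theorem \ref{cor3}: the restriction to $[s,T]$ of the solution starting at $r$ solves \eqref{PA} with datum $E(s,r)x$. Continuity of $t\mapsto E(t,s)x$ comes from the embedding $H^1([s,T],H)\hookrightarrow C([s,T],H)$ furnished by the maximal regularity; joint continuity in $(t,s)$ together with the contractive bound then allows me to pass, by an $\varepsilon/3$ density argument, from $x\in V$ to arbitrary $x\in H$. The core of the argument is item (ii), which extracts the parabolic smoothing for initial data merely in $H$. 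For $x\in V$ I would introduce $v(t):=tE(t,0)x$; differentiating gives $v'(t)+A(t)v(t)=u(t)$ with $v(0)=0\in V$, so Theorem \ref{cor3} applies to $v$ with right-hand side $u\in\ldue{H}$. Since contractivity yields $\|u\|_{\ldue{H}}\le\sqrt T\,\|x\|_H$, the estimate \eqref{stimamaxreg2} produces exactly \eqref{stimaevol}, uniformly in $x\in V$. Density then extends \eqref{stimaevol} to every $x\in H$, and writing $u(t)=v(t)/t$ for $t>0$ recovers the $H^1_{\mathrm{loc}}((0,T],H)\cap L^2_{\mathrm{loc}}((0,T],V)$ regularity; continuity up to $t=0$ (membership in $C([0,T],H)$ with $E(t,0)x\to x$) follows for $x\in V$ from item (i) and for general $x\in H$ again by density together with the uniform bound $\|E(t,0)\|\le1$. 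I expect this passage — reconstructing the regularity of $u$ near the origin from the auxiliary function $v$ and securing continuity at $t=0$ for data only in $H$ — to be the main technical obstacle.

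Finally, items (iii) and (iv) I would settle together. Linearity splits the solution of \eqref{PNA} into the homogeneous part $E(t,0)x$ and an inhomogeneous part with zero initial datum, which I would represent by the Duhamel integral $\int_0^t E(t,s)f(s)\,ds$; checking that \eqref{PNAsol} indeed solves \eqref{PNA} reduces to a direct verification using the evolution property and the estimates above, while uniqueness is once more the energy inequality applied to the difference of two solutions. For (iv), given $x\in V$ both the solution of Theorem \ref{cor3} and the formula \eqref{PNAsol} solve \eqref{PNA}, so uniqueness forces them to coincide, the continuous representative being the one singled out by $H^1([0,T],H)\hookrightarrow C([0,T],H)$.
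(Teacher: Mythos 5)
Your proposal is correct and follows essentially the same route as the paper's proof: constructing $E(t,s)$ on $V$ via Theorem \ref{cor3}, obtaining contractivity from accretivity and extending to $H$ by density, using the auxiliary function $v(t)=tE(t,0)x$ with right-hand side $E(\cdot,0)x\in\ldue{H}$ for item (ii), and settling (iii)--(iv) by the Duhamel formula and uniqueness. The only presentational difference is that the paper imports the evolution-family structure and item (iii) from the corresponding propositions of \cite{ACFP} rather than re-deriving them, but the underlying arguments coincide with yours.
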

 \begin{proof}

From Theorem \ref{cor3} and by \cite[Proposition $2.3$]{ACFP}, a strongly continuous evolution family $\{E_0(t,s)\}_{(t,s)\in \Delta}\subset \mathcal{L}(V)$ exists such that $u(t):=E_0(t,s)x$ is the unique solution  in $H^1 ([s,T],H) \cap L^2([s,T],V)$ to the Cauchy problem \eqref{PA}.\\
Since $A(t)$ is accretive, it follows that
\begin{eqnarray*}
\| u(t) \|_H^2 - \|u(s)\|_H^2 && = \int_s^t \frac d {dt} \| u(\tau) \|_H^2 d\tau = 2 \int_s^t \Re \langle u'(\tau) , u(\tau) \rangle_H  d\tau \\
\ && = 2 \int _s^t - \Re  \langle A(\tau) u(\tau), u(\tau) \rangle_H  d\tau \leq 0,
\end{eqnarray*}
which readily implies that, for any fixed $(t,s)\in \Delta$, $$\| E_0(t,s) x \|_H = \|u(t)\|_H \leq \|u(s)\|_H = \| x \|_H.$$ This last inequality, together with the density of $V$ in $H$,  permit to uniquely extend $E_0(t,s):  V \to H$ to a linear operator  $E(t,s):H\to H$ by means of the BLT Theorem. The strong continuity of the family $\{ E(t,s) \}_{(t,s)\in \Delta}$ also follows from a density argument.\\

(i). The claims follow from above construction and (\ref{stimamaxreg2}) of Remark \ref{rem4}. \\

(ii).
Arguing as in \cite[Corollary $3.4$]{ACFP}, we start by fixing $x\in V.$ Set $v(t):= t E(t,0)x$ then, since $E(\cdot, 0)x\in \ldue{H}$ and by Theorem \ref{cor3}, $v$ is the unique solution of the non-homogeneous Cauchy problem
\begin{eqnarray*}
		\begin{cases}
			v'(t)+{A}(t)v(t)=E (t,0)x, \qquad a.e. \ \ t\in[0,T]\ \\
			v(0)=0,
		\end{cases}
\end{eqnarray*}
moreover by \eqref{stimamaxreg2},

\begin{eqnarray*}
\|v\|_{H^{1}([0,T],H)}+\|v\|_{\ldue{V}}
+\|A(\cdot)v(\cdot)\|_{\ldue{H}} &\leq& C\left(\|E(\cdot,0)x\|_{\ldue{H}}\right)\\
 &\leq& C\sqrt{T}\|x\|_{H}.
\end{eqnarray*}
 This last fact means that for any $\varepsilon \in (0,T),$
\begin{eqnarray}\label{stimaevoleps}
\nonumber \|E(\cdot,0)x\|_{L^2([\varepsilon,T],H)}+\|E(\cdot,0)x\|_{L^2([\varepsilon,T],V)}\\
\nonumber +\left\|\frac {dE (\cdot,0)x} {dt}\right\|_{L^2([\varepsilon,T],H)} & \leq & \frac {C\sqrt{T}} \varepsilon \| x \|_H.
\end{eqnarray}
  Again, by exploiting the density of $V$ in $H,$ it follows that the same inequalities also holds for any fixed $x\in H.$ We have then proved that $E(t,0)x$ lies in $H^1_\text{loc} ((0,T],H) \cap L^2_\text{loc}((0,T],V)\cap C([0,T],H)$ as well as the estimate (\ref{stimaevol}).\\

  (iii). It essentially follows as in \cite[Proposition $2.4$]{ACFP} for $p=2.$\\

  (iv). By maximal regularity, $u$ has a continuous representation in $C([0,T],H)$ (see \cite{Sh}) and $u$ solves \eqref{PNA} in $H^1_\text{loc} ((0,T],H) \cap L^2_\text{loc}((0,T],V)\cap C([0,T],H).$ Then result derives from the uniqueness of the solution.
 \end{proof}

\section{Finite dimensional reduction.}\label{sez3}

Let  $\{ \varphi_n\}_{n\in\mathbb{N}}$ be a Schauder basis for $V$ and hence for $H$ since $V$ densely embeds onto $H$. We assume  $\{ \varphi_n\}_{n\in\mathbb{N}}$ to be orthogonal w.r.t. the inner product of $H,$ $\langle \cdot, \cdot \rangle_H .$ For a fixed $m\in\mathbb{N},$ let $\P:H \to \operatorname{span_{\mathbb{C}}}\{ \varphi_j : 1\leq j\leq m\}$ be the projection $\P (\sum_{j=1}^\infty v_j \varphi_j) := \sum_{j=1}^m v_j \varphi_j.$ To avoid confusion, by $H_m$ we will denote the space $\P H$ endowed with the norm $\| \cdot \|_H,$ while $V_m$ indicates that the equivalent norm $\| \cdot \|_V$ is considered on the same finite dimensional vector space.\\
Note that $\P$ is self-adjoint with respect the $H$-inner product (that is  w.r.t. $\langle \cdot, \cdot \rangle_H$ ), indeed whenever $u,v \in H,$ we have
\[
\langle u, \P v \rangle_H = \lim_{k\to \infty} \langle \sum_{i=1}^k u_i \varphi_i, \sum_{j=1}^m v_j \varphi_j  \rangle_H =  \langle \sum_{i=1}^m u_i \varphi_i, \sum_{j=1}^m v_j \varphi_j  \rangle_H =\langle \P u, v\rangle_H.
\]
Of course, since $\{ \varphi_n\}_{n\in\mathbb{N}}$ is a Schauder basis for both $V$ and $H,$ it holds that $\| v - \P v\|_V \to 0$ and $\|x-\P x \|_H \to 0$ as $m\to\infty$ for any fixed $v\in V$ and $x\in H.$\\
Let $a:[0,T] \times V\times V \to \mathbb{C}$ be a sesquilinear form satisfying (H1)-(H4). For fixed $m\in\mathbb{N}$ we denote with $a_m : [0,T] \times V\times V \to \mathbb{C}$ the sesquilinear form given by
\[
a_m (t,u.v) := a(t,\P u, \P v) + \alpha \langle (I-\P)u , (I-\P)v \rangle_V.
\]
\begin{rem}
Note that $a_m$ also satisfies properties (H1)-(H4). Indeed, (H1) and (H4) are trivially satisfied while (H2) is easily derived from
\begin{eqnarray*}
&&| a_m (t,u,v) | \leq M \| \P u \|_V \|\P v\|_V + \alpha \| (I - \P) u \|_V \|(I-\P) v\|_V \\
&&\leq k_m (M+\alpha )\|u\|_V \|v\|_V,
\end{eqnarray*}
since both $\| \P \|_{\mathcal{L}(V)}$ and $\| I-\P \|_{\mathcal{L}(V)}$  are bounded by some $k_m >0$ for any fixed $m\in \mathbb{N}.$\\
To prove  (H3) observe that
\begin{eqnarray*}
&& \Re ( a_m(t,u,u) ) = \Re ( a (t, \P u, \P u) )+ \alpha \| I - \P u \|_V^2 \\
&& \geq  \alpha \| \P u \|_V^2 +  \alpha \| I - \P u \|_V^2\\
&& = 2 \alpha (\frac{1}{2}  \| \P u \|_V^2 + \frac{1}{2} \| I - \P u \|_V^2 )\\
&& \geq  2 \alpha\left \| \frac{1}{2} \P u + \frac{1}{2} u -\frac{1}{2} \P u\right\|_V^2\\
&& = \frac{\alpha}{2} \| u \|_V^2
\end{eqnarray*}
since $\| \cdot \|_V^2$ is convex.\\
\end{rem}
Let $A_m (t)$ be the operator defined by
\[
a_m(t,u,v) := \langle A_m (t) u, v \rangle_H \text{ for all }v\in V
\]
and with domain $\mathcal{D}(A_m (t)):=\{ u \in V: A_m (t) u \in H\}.$
\begin{rem}
Let $B:V \to H$ be the operator associated to the sesquilinear form $\langle \cdot , \cdot \rangle_V.$ Then for any fixed $t\in[0,T]$ and for any $u,v\in V$ one has
\begin{eqnarray*}
&& \langle A_m (t) u, v\rangle_H = a_m(t,u,v) = a(t,\P u, \P v) + \alpha \langle (I-\P) u, (I-\P) v \rangle_V \\
&&= \langle A (t)\P u, \P  v \rangle_H + \alpha \langle B (I-\P) u, (I-\P) v \rangle_H \\
&& =  \langle (\P A(t) \P +\alpha  \langle (I-\P) B (I-\P))u,v\rangle_H.
\end{eqnarray*}
Therefore, the identity
\begin{equation}\label{id}
A_m (t) = \P A(t) \P + (I-\P)B(I-\P)
\end{equation}
follows by density. It is then straightforward to see that  for any $u\in V_m,$
\[
A_m(t) u = \P A(t) u
\]
holds.
\end{rem}
\begin{lem}\label{Am} For any fixed $m\in\mathbb{N},$ $A_m(t)$ defined above generates a contractive evolution system $\{E_m (t,s) \} \subset \mathcal{L}(V)$ such that for any $x\in V,$ $u(t):=E_m(t,s)x$ is the unique solution in $H^1 (s,T,H) \cap L^2(s,T,V)$ of the homogeneous problem
\begin{eqnarray}\label{PAm}
		\begin{cases}
			u'(t)+{A_m}(t)u(t)= 0, \qquad a.e. \ \ t\in[s,T]\ \\
			u(s)=x \in V,
		\end{cases}
\end{eqnarray}
which satisfies inequality \eqref{stimaevolV}. Moreover, if $x\in V_m$ then $u(t)\in V_m$ for any $t\in [0,T].$
\end{lem}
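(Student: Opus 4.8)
The plan is to deduce $\{E_m(t,s)\}$ directly from Lemma~\ref{evolsystem}, applied to the form $a_m$ and its associated operator $A_m(t)$. The remark preceding the statement already guarantees that $a_m$ fulfills (H1)--(H4) (with coercivity constant $\alpha/2$ in (H3)), so the only hypothesis of Lemma~\ref{evolsystem} still to be checked is the square root property (S) for $A_m(t)$, namely $\mathcal{D}(A_m(t)^{1/2})=V$ for every $t\in[0,T]$. Granting (S), assertion (i) of Lemma~\ref{evolsystem} produces at once the strongly continuous family $\{E_m(t,s)\}\subset\mathcal{L}(V)$, the unique solvability of \eqref{PAm} in $H^1(s,T,H)\cap L^2(s,T,V)$, and the estimate \eqref{stimaevolV}; contractivity on $H$ follows exactly as in the proof of Lemma~\ref{evolsystem}, from the accretivity $\Re\langle A_m(t)u,u\rangle_H\geq\tfrac{\alpha}{2}\|u\|_V^2\geq 0$ coming from (H3) for $a_m$.

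The delicate point, and the one I expect to demand most care, is the verification of (S); here I would exploit the block structure recorded in \eqref{id}. Since $\P$ is self-adjoint and idempotent on $H$, the splitting $H=H_m\oplus(I-\P)H$ is orthogonal, and \eqref{id} shows that $A_m(t)$ commutes with $\P$, hence leaves each summand invariant and is block diagonal with respect to this decomposition. Consequently $A_m(t)^{1/2}$ is block diagonal too and $\mathcal{D}(A_m(t)^{1/2})$ splits as the direct sum of the domains of the square roots of the two blocks. On $H_m$ the block is $\P A(t)\P$ acting on the finite dimensional space $H_m=V_m$, where every operator (and its square root) has full domain, so it contributes $V_m$. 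On $(I-\P)H$ the block is the self-adjoint positive operator associated with the symmetric form $(u,v)\mapsto\alpha\langle(I-\P)u,(I-\P)v\rangle_V$, whose form domain is $(I-\P)V$; since for a symmetric form the domain of the square root of the associated operator coincides with the form domain, this block contributes $(I-\P)V$. Adding the two pieces gives $\mathcal{D}(A_m(t)^{1/2})=V_m\oplus(I-\P)V=\P V\oplus(I-\P)V=V$, which is (S). Notice that this argument needs neither (S) for the original form nor any regularity in $t$, as the whole $t$-dependence sits in the harmless finite dimensional block.

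Finally I would settle the invariance of $V_m$, again via the commutation $A_m(t)\P=\P A_m(t)$ furnished by \eqref{id}. Let $x\in V_m=H_m$ and $u(t):=E_m(t,s)x$; put $w(t):=(I-\P)u(t)$. Applying $I-\P$ to \eqref{PAm} and using that $A_m(t)$ commutes with $I-\P$, one sees that $w$ solves the same homogeneous problem
\[
w'(t)+A_m(t)w(t)=0,\qquad w(s)=(I-\P)x=0 .
\]
By the uniqueness already obtained from Lemma~\ref{evolsystem} (equivalently, by the energy estimate $\frac{d}{dt}\|w(t)\|_H^2=-2\Re\langle A_m(t)w(t),w(t)\rangle_H\leq 0$ together with $w(s)=0$), it follows that $w\equiv 0$, i.e. $(I-\P)u(t)=0$ and therefore $u(t)\in H_m=V_m$ for every $t$, which completes the proof.
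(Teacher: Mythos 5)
Your proof is correct, and it reaches the crucial step --- the square root property (S) for $A_m(t)$ --- by a genuinely different route than the paper. The paper verifies (S) by observing that $A_m(t)=\P A(t)\P+(I-\P)B(I-\P)$ is a bounded perturbation of a symmetric operator, hence generates a cosine function on $H$ (Arendt--Batty--Hieber--Neubrander), so that its numerical range lies in a parabola (Haase) and McIntosh's Theorems A and C yield $\mathcal{D}(A_m(t)^{1/2})=V$. You instead exploit the commutation $A_m(t)\P=\P A_m(t)=\P A(t)\P$ coming from \eqref{id} to diagonalize $A_m(t)$ along the orthogonal splitting $H=H_m\oplus(I-\P)H$, dispatching the finite-dimensional block trivially and the infinite-dimensional block by Kato's second representation theorem for the symmetric coercive form $\alpha\langle(I-\P)\cdot,(I-\P)\cdot\rangle_V$; the conclusion $\mathcal{D}(A_m(t)^{1/2})=V_m\oplus(I-\P)V=V$ follows. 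Your argument is more elementary and self-contained, and it isolates the real reason the lemma holds (all the $t$-dependence and all the non-symmetry sit in a finite-dimensional block); the paper's argument is shorter on the page but outsources the work to cosine-function and numerical-range machinery. The one step you take for granted --- that the square root of a block-diagonal m-sectorial operator is block diagonal with $\mathcal{D}(A_m(t)^{1/2})=\mathcal{D}(T_1^{1/2})\oplus\mathcal{D}(T_2^{1/2})$ --- is standard (the resolvents are block diagonal, hence so is the Balakrishnan integral defining the square root), but it deserves a sentence. For the invariance of $V_m$ you apply $I-\P$ to the equation and invoke uniqueness; the paper differentiates $\|u(t)-\P u(t)\|_H^2$ directly and obtains the same energy inequality, so this part is essentially identical in substance.
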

\begin{proof}
The existence of the evolution system $\{ E_m (t,s) \}$ will directly follow from Lemma \ref{evolsystem} once we prove that $A_m(t)$ satisfies property (S). \\
Fix $t\in[0,T].$ It follows from \cite[Corollary 7.1.4]{arendtvectorvalued} that the operator $A_m(t) = \P A(t) \P + (I-\P) B (I-\P)$ generates a cosine function on $H,$ since $\P A(t) \P$ is bounded and $(I-\P)B(I-\P)$ is symmetric.\\
 By \cite[Corollary 5.18]{haase2003}, the numerical range $W(A_m(t)):=\{ \langle A_m(t) u, u \rangle_H | u\in V, \|u \|_H=1\}$ is then contained in a parabola.
   Lastly, by \cite[Theorems A and C]{mcintosh82} it follows that the square root condition $\mathcal{D}(A_m(t)^{1/2} )=V$ holds (see also \cite{arendt2004}).  This proves the first part.\\
Let $x\in V_m$ and let $u(t)$ be the solution of \eqref{PAm}. Note that the function $t\to u(t)-\P u(t)$ has a.e. derivative $ u'(t) -\P u'(t)$ since $\P$ is linear; moreover it holds
\begin{eqnarray}\label{pupu}
\nonumber&& \frac{1}{2} \frac{d}{dt} \| u(t) - \P u(t) \|_H^2 = \Re\langle u' (t) - \P u' (t), u(t)- \P u(t) \rangle_H \\
\nonumber && = - \Re\langle A_m (t)  u(t), u(t)-\P u(t) \rangle_H +  \Re \langle \P A_m (t)  u(t), u(t)-\P u(t) \rangle_H\\
\nonumber&& = - \Re\langle A_m (t)  u(t), u(t)-\P u(t) \rangle_H + \Re \langle A_m (t)  u(t), \P ( u(t)-\P u(t) ) \rangle_H \\
\nonumber && = -\Re \langle A_m (t)  u(t), u(t)-\P u(t) \rangle_H \\
\nonumber && = - a_m (t, u (t), u(t) - \P u (t) )\\
&& = -a (t, \P u(t), \P ( I-\P) u(t)) - \alpha \| (I-\P) u(t) \|_V^2,
\end{eqnarray}
where we have used the fact that $\P$ is self-adjoint w.r.t $\langle \cdot, \cdot \rangle_H$ and $I-\P$ is idempotent. Note that $\P (I-\P)  = 0,$ which implies that $a (t, \P u(t), \P ( I-\P) u(t))=0$ and \eqref{pupu} brings to
\[
\frac{d} {dt}  \| u(t) - \P u(t) \|_H^2 \leq 0 - \alpha \| (I-\P) u(t) \|_V^2 \leq 0.
\]
The direct consequence of this last is that for a.e. $t\in [0,T],$ $ \| u(t) - \P u(t)\|_H \leq \| x - \P x\|_H=0$ since $x\in V_m;$ thus $u(t)=\P u(t)$ and $u(t)\in V_m$ follows.

\end{proof}

\begin{lem}\label{lem:wmconverge}Let $\{E(t,s)\}_{(t,s)\in \Delta}$ and $\{E_{m}(t,s)\}_{(t,s)\in \Delta}$ be
the evolution systems generated by $A(t)$ and $A_{m}(t)$ respectively.
Then for  any fixed $x\in H,$ $\{ E_{m}(t,s)\P x\}_{n\in\mathbb{N}}$  converges in $H$ to $E(t,s) x$ uniformly on $t>s$ in $[0,T].$
\end{lem}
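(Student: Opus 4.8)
The plan is to prove the convergence first for initial data $x\in V$ and then to extend it to arbitrary $x\in H$ by density, exploiting that both families are contractions on $H$ uniformly in $m$. So fix $x\in V$, set $u(t):=E(t,s)x$ and $u_m(t):=E_m(t,s)\P x$; by Lemma \ref{evolsystem} and Lemma \ref{Am} both enjoy $L^2$-maximal regularity on $[s,T]$, so that $w_m:=u_m-u$ belongs to $H^1([s,T],H)\cap L^2([s,T],V)$ and the Gelfand-triple integration-by-parts identity $\frac{d}{dt}\|w_m(t)\|_H^2=2\Re\langle w_m'(t),w_m(t)\rangle_H$ holds for a.e. $t$. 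Testing the two variational equations satisfied by $u$ and $u_m$ against $w_m(t)\in V$ and subtracting yields the energy identity
\[
\tfrac{1}{2}\tfrac{d}{dt}\|w_m(t)\|_H^2+\Re\, a_m(t,w_m(t),w_m(t))=\Re\big(a(t,u(t),w_m(t))-a_m(t,u(t),w_m(t))\big),
\]
where the coercivity of $a_m$ (with constant $\alpha/2$, established above) bounds the second left-hand term below by $\tfrac{\alpha}{2}\|w_m(t)\|_V^2$.

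The decisive point is the control of the right-hand side, which measures the consistency error between the two forms and is not obviously small. Here I would use the invariance proved in Lemma \ref{Am}: since $\P x\in V_m$ we have $u_m(t)\in V_m$ for every $t$, whence $(I-\P)w_m(t)=-(I-\P)u(t)$ and $\P w_m(t)=u_m(t)-\P u(t)$. Expanding $a(t,u,w_m)-a_m(t,u,w_m)$ via the definition of $a_m$ and inserting these relations, every surviving term acquires a factor $\|(I-\P)u(t)\|_V$; using (H2), the $H$-inner product estimate and the uniform bound $K:=\sup_m\|\P\|_{\mathcal L(V)}<\infty$ (valid because $\{\varphi_n\}$ is a Schauder basis of $V$), the right-hand side is dominated by
\[
C\|u(t)\|_V\,\|(I-\P)u(t)\|_V+C\|(I-\P)u(t)\|_V\,\|w_m(t)\|_V+C\|(I-\P)u(t)\|_V^2.
\]
A Young inequality absorbs the middle term into the coercive term, and integrating on $[s,t]$ gives, with $\eta_m(\tau):=\|(I-\P)u(\tau)\|_V$,
\[
\|w_m(t)\|_H^2\leq \|(I-\P)x\|_H^2+C\int_s^T\|u\|_V\,\eta_m\,d\tau+C\int_s^T\eta_m^2\,d\tau,
\]
uniformly in $t\in[s,T]$ (no Gronwall step is needed, since the coercive integral is dropped with the correct sign).

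To close the argument for $x\in V$, I would note that $u\in L^2([s,T],V)$, so $\eta_m(\tau)\to0$ for a.e. $\tau$ and $\eta_m\leq(1+K)\|u\|_V\in L^2([s,T])$; dominated convergence then forces $\int_s^T\eta_m^2\,d\tau\to0$ and, by Cauchy–Schwarz, $\int_s^T\|u\|_V\,\eta_m\,d\tau\to0$, while $\|(I-\P)x\|_H\to0$. Hence $\sup_{t\in[s,T]}\|w_m(t)\|_H\to0$, which is the claim. For general $x\in H$ and $\varepsilon>0$, pick $y\in V$ with $\|x-y\|_H<\varepsilon$ and split
\[
\|E_m(t,s)\P x-E(t,s)x\|_H\leq\|E_m(t,s)\P(x-y)\|_H+\|E_m(t,s)\P y-E(t,s)y\|_H+\|E(t,s)(y-x)\|_H ;
\]
the outer terms are $\leq\|x-y\|_H<\varepsilon$ by contractivity of $E_m$, $E$ and $\P$ on $H$, while the middle one tends to $0$ uniformly in $t$ by the case $y\in V$.

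I expect the main obstacle to be precisely the treatment of the consistency term $a(t,u,w_m)-a_m(t,u,w_m)$: a priori this quantity need not be small, and it is only the invariance $u_m(t)\in V_m$ from Lemma \ref{Am} that converts it into a quantity governed by $\|(I-\P)u(t)\|_V$, which the maximal-regularity bound $u\in L^2([s,T],V)$ then renders negligible in the limit. A secondary technical care is the uniform (in $m$) boundedness of the projections on $V$, which is what allows the dominated-convergence argument to run with a single $L^2$ majorant.
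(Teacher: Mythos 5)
Your proof is correct and follows the same overall strategy as the paper's: an energy estimate for the difference $w_m=u_m-u$ on $[s,T]$, in which the invariance $u_m(t)\in V_m$ from Lemma \ref{Am} is the key step that reduces the error to quantities controlled by $\|(I-\P)u\|_{L^2([s,T],V)}$, followed by a density argument in $H$. The differences are in the bookkeeping. The paper works directly at the operator level: it uses the accretivity of $A(t)$ to discard the quadratic term $-\Re\, a(t,w_m,w_m)$ and is left with the single cross term $a(t,u_m(t),\P u(t)-u(t))$, which it bounds by $M\|u_m\|_{L^2([s,T],V)}\|(I-\P)u\|_{L^2([s,T],V)}$ using the uniform a priori bound on $\|u_m\|_{L^2([s,T],V)}$, so no Young absorption is needed. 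You instead use the coercivity of $a_m$ together with a Galerkin-style consistency decomposition of $a(t,u,w_m)-a_m(t,u,w_m)$ and absorb the mixed term by Young's inequality; this is a genuine variant but works equally well, and your identification of $(I-\P)w_m=-(I-\P)u$ plays exactly the role that $\P u_m=u_m$ plays in the paper. Two points where your write-up is arguably more complete: you obtain $\|(I-\P)u\|_{L^2([s,T],V)}\to 0$ by dominated convergence, using the uniform bound on the basis projections in $\mathcal{L}(V)$ to supply the majorant, whereas the paper uses a continuous $\varepsilon$-approximation of $u$ and a mean-value argument; and you carry out explicitly the final density extension from $x\in V$ to $x\in H$ via contractivity of $E$, $E_m$ and $\P$ on $H$, a step the paper's proof leaves implicit even though the statement is asserted for all $x\in H$.
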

\begin{proof}
We start by fixing $x\in V$ and, for fixed $(t,s)\in\Delta,$ set $u(t):=E(t,s)x,$ $u_m(t):=E_m(t,s)\P x$ and $z_m(t):=u_m(t)-u(t).$  Note that by definition,
\begin{eqnarray}\label{bella}
\nonumber &&\frac{1}{2} \frac{d}{dt} \| z_m (t) \|_H ^2 \\
\nonumber && = -\Re \langle \ \P A (t) u_m (t), u_m (t) - u(t) \rangle_H +\Re \langle A (t) u(t), u_m (t)-u(t) \rangle_H \\
\nonumber &&= -\Re \langle  A (t)  u_m (t), \P u_m (t) - \P u(t) \rangle_H +\Re \langle A (t) u(t), u_m (t)-u(t) \rangle_H \\
\nonumber &&=  -\Re \langle A (t)  u_m (t), u_m (t) -  u(t) \rangle_H + \Re \langle  A (t)  u_m (t),\P u (t) -  u(t) \rangle_H\\
\nonumber &&\phantom{=}+\Re \langle A (t) u(t), u_m (t)-u(t) \rangle_H \\
\nonumber && \leq\Re \langle  A (t)  u_m (t),\P u (t) -  u(t) \rangle_H \\
&& = a(t,u_m (t),  \P u (t) -  u(t)) \leq  M \| u_m (t) \|_V \|u(t)-\P u(t) \|_V,
\end{eqnarray}
since $A$ is accretive.
By integrating, from the last and by H\"{o}lder inequality, one gets
\begin{eqnarray}\label{maggior}
\nonumber && \| z_m (t) \|_H^2 - \| z_m(s) \|_H^2 \leq 2M \| u_m \|_{L^2 ([s,T],V)} \|u-\P u \|_{L^2 ([s,T],V)}\\
&&\leq 2M \|  \P x \|_H  \|u-\P u \|_{L^2 ([s,T],V)}.
\end{eqnarray}
Fix  $\varepsilon>0$ and let $u_\varepsilon$ a  continuous $\varepsilon/4-$ approximation of $u$ in ${L^2 ([s,T],V)}.$ Let $\tau_\varepsilon$ be such that
\[
T \|(I - \P) u_{\varepsilon} (\tau_\varepsilon)\|_V^2  = \int_s^T \|  (I - \P)  u_{\varepsilon} (t)  \|_V^2 dt=\| (I- \P) u_{\varepsilon} \|^2_{L^2 ([s,T],V)}
\]
and let $m$ be big enough so that $T \|(I - \P) u_{\varepsilon} (\tau_\varepsilon)\|_V^2<\varepsilon^2/4.$ We immediately derive
\begin{eqnarray*}
&&\|u-\P u \|_{L^2 ([s,T],V)} \leq 2\| u - u_\varepsilon \|_{L^2 ([s,T],V)} + \| (I- \P) u_{\varepsilon} \|_{L^2 ([s,T],V)} \leq \varepsilon.
\end{eqnarray*}
Note that $\| z_m(s) \|_H =\| (I-\P)x\|_H \to 0$ as $m\to \infty$ since the embedding of $V$ into $H$ is continuous, hence by \eqref{maggior} one gets
\[
\| z_m (t) \|_H < 2 \varepsilon
\]
for $m$ large enough and we have proved the lemma.
\end{proof}

\begin{rem}\label{aggiunto}
The thesis of previous lemma remains true if $E_m(t,s)$ is replaced with the adjoint $E_m(t,s)^*$ and $E(t,s)$ is replaced by $E(t,s)^*.$ Indeed, the following formula holds for the adjoint (see also \cite{Dan,Ella}):
\[
 E(t,s)^* x = E^r (T-s,T-t) x \text{ for any fixed }x\in H \text{ and } (t,s)\in \Delta,
\]
where $ \{ E^r (t,s)  \} $ is the evolution system associated to the operator $A^r,$ which in turns is generated by the form $a^r(t,u,v)=\overline{a(T-t,v,u)},$ which also satisfies properties $(H1)-(H4).$ \\
Note that, for a fixed $m\in\mathbb{N},$
\begin{eqnarray*}
&& (a_m)^r (t,u,v) = \overline{a_m (T-t, v,u)} \\
&& = \overline {a (T-t,\P v,\P u)}  + \alpha \overline{\langle (I-\P) v, (I-\P) u \rangle_V} \\
&& = a^r (t,\P u,\P v) + \alpha \langle (I-\P) u, (I-\P) v \rangle_V.
\end{eqnarray*}
This last fact proves that $E_m(t,s)^* x = E_m^r (T-s,T-t),$ where $E_m^r (t,s)$ is associated to the form $a^r (t,\P u,\P v) + \alpha \langle (I-\P) u, (I-\P) v \rangle_V.$ Lemma \ref{lem:wmconverge} can be then applied to $E^r(t,s)$ and $E_m^r(t,s)$ to get the result.
\end{rem}
\section{Nonlinear superposition operators.}\label{sez4}
 We will make use of the next lemma, which takes inspiration from \cite[Lemma 7.11]{BaTe} (see also \cite{MoTe}). Statement and proof are given in the particular setting of the $L^2$ space of $H-$valued functions, though further extensions to a more general setting might be possible. We recall the following
 \begin{defn}\label{demicont}
  Given two Banach spaces $X$ and $Y,$ a function $F:X\to Y$ is said to be demicontinuos if for any sequence $\{x_n\}_{n\in\mathbb{N}} \subset X$ strongly converging to $x\in X,$ one has that $\{ F(x_n) \}_{n\in\mathbb{N}}$ weakly converges to $F(x)$ or, in other words, that
 \[
 w-\lim_{n\to\infty}F(x_n) = F(x)\text{ whenever }x_n\to x.
 \]
 \end{defn}
 \begin{lem}\label{weaktostrong}
 Suppose that  $f:[0,T]\times H \to H$ satisfies
 \begin{enumerate}[(F1)]
 \item $f(\cdot, x)$ is measurable for any $x\in H;$
 \item $f(t, \cdot)$ is demicontinuous in $H$ for any fixed $t\in [0,T].$
 \item there exist $a>0$ and $b \in  L^2 ([0,T],\mathbb{R}_+)$ such that
 \begin{equation}\label{superposcond}
 \| f(t,x) \|_H \leq a \| x \|_H + b(t).
 \end{equation}
\end{enumerate}
 Then the superposition operator $N_f : L^2 ([0,T],H) \to L^2 ([0,T], H)$ given by $N_f (u) (t):=f(t,u(t))$ is well-defined  and maps bounded sets into bounded sets; moreover it is demicontinuous.
 \end{lem}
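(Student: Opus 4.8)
The plan is to separate the statement into its three assertions, dispatch well-definedness and the bounded-to-bounded property through the classical superposition-operator theorem, and reserve the genuine work for demicontinuity.

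First I would establish that $f$ is jointly measurable on $[0,T]\times H$, since this is exactly the hypothesis required to apply Theorem \ref{LupA}. Fix $y\in H$ and look at the scalar map $(t,x)\mapsto \langle f(t,x),y\rangle_H$. By (F1) it is measurable in $t$, and by (F2) it is continuous in $x$: demicontinuity of $f(t,\cdot)$ means that $x_n\to x$ forces $f(t,x_n)\rightharpoonup f(t,x)$, hence $\langle f(t,x_n),y\rangle_H\to\langle f(t,x),y\rangle_H$. A map that is measurable in one variable and continuous in the other is a Carath\'eodory function and therefore jointly measurable, so $f$ is weakly measurable; since $H$ is separable, the Pettis measurability theorem upgrades this to strong joint measurability. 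As (F3) provides precisely the growth bound \eqref{subl} in the case $p=q=2$, Theorem \ref{LupA} then gives at once that $N_f:L^2([0,T],H)\to L^2([0,T],H)$ is well-defined and maps bounded sets into bounded sets.

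For demicontinuity I would take $u_n\to u$ in $L^2([0,T],H)$ and aim at $N_f(u_n)\rightharpoonup N_f(u)$. Since $N_f$ sends bounded sets to bounded sets, $\{N_f(u_n)\}$ lies in a fixed ball of the separable Hilbert space $L^2([0,T],H)$, on which the weak topology is metrizable; hence it suffices to show that every subsequence admits a further subsequence converging weakly to $N_f(u)$. Given an arbitrary subsequence, I would pass to a further one $(u_{n_j})$ with $u_{n_j}(t)\to u(t)$ for a.e.\ $t$ and dominated by a fixed $h\in L^2([0,T],\mathbb{R}_+)$, using the standard fact that $L^2$-convergence yields an a.e.-convergent, $L^2$-dominated subsequence. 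Demicontinuity of $f(t,\cdot)$ then gives $N_f(u_{n_j})(t)=f(t,u_{n_j}(t))\rightharpoonup f(t,u(t))=N_f(u)(t)$ in $H$ for a.e.\ $t$, while (F3) furnishes the pointwise bound $\|N_f(u_{n_j})(t)\|_H\leq a\,h(t)+b(t)$. To test weak convergence in $L^2([0,T],H)$, fix $w\in L^2([0,T],H)$: then $\langle N_f(u_{n_j})(t),w(t)\rangle_H\to\langle N_f(u)(t),w(t)\rangle_H$ for a.e.\ $t$, and the integrand is dominated by $(a\,h(t)+b(t))\|w(t)\|_H$, which is integrable by Cauchy--Schwarz. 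Dominated convergence yields $\int_0^T\langle N_f(u_{n_j})(t),w(t)\rangle_H\,dt\to\int_0^T\langle N_f(u)(t),w(t)\rangle_H\,dt$, i.e.\ $N_f(u_{n_j})\rightharpoonup N_f(u)$, which closes the subsequence argument.

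I expect the \emph{main obstacle} to be the mismatch between the two modes of convergence: demicontinuity supplies only pointwise-in-$t$ weak convergence in $H$, and only along a subsequence, whereas the target is weak convergence in $L^2([0,T],H)$. The device reconciling them is the extraction of an a.e.-convergent, $L^2$-dominated subsequence, which turns the growth bound (F3) into a legitimate dominating function and lets dominated convergence carry the pointwise weak limit into the integral; the metrizability of the weak topology on bounded subsets of the separable space $L^2([0,T],H)$ is then what allows me to recover convergence of the full sequence from this subsequence statement.
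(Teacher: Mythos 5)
Your proof is correct, and it follows the same overall strategy as the paper (Theorem \ref{LupA} for well-definedness and boundedness, then pointwise weak convergence $f(t,u_n(t))\rightharpoonup f(t,u(t))$ obtained from (F2) as the engine for demicontinuity of $N_f$), but the two arguments diverge in how they pass from the pointwise statement to weak convergence in $L^2([0,T],H)$. The paper extracts, by reflexivity, a weak cluster point $v$ of the bounded sequence $\{N_f(u_m)\}$ and then identifies $v=N_f(u)$ by testing against a countable dense family $\{e_j\}\subset H$ together with an Egorov-type uniformization of the scalar convergences $\langle f(t,u_m(t)),e_j\rangle_H\to\langle f(t,u(t)),e_j\rangle_H$. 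You instead test directly against an arbitrary $w\in L^2([0,T],H)$, dominate the integrand by $(a\,h(t)+b(t))\|w(t)\|_H$ via (F3) and Cauchy--Schwarz, and conclude by dominated convergence, recovering the full sequence through metrizability of the weak topology on bounded sets. Your route buys two things the paper glosses over: you explicitly extract the a.e.-convergent, $L^2$-dominated subsequence (the paper asserts $u_m(t)\to u(t)$ a.e.\ without passing to a subsequence, and never invokes the subsequence principle to return to the full sequence), and you verify the joint measurability hypothesis of Theorem \ref{LupA} from (F1)--(F2) via a Carath\'eodory-plus-Pettis argument, which the paper takes for granted. The paper's route, in exchange, avoids any appeal to a dominating function for the test integrals, needing only the weak $L^2$-limit $v$ and scalar identifications. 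Both are legitimate; yours is, if anything, the more airtight of the two.
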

 \begin{proof}
 The fact that $N_f$ is well-defined and maps bounded sets into bounded sets is given by Theorem \ref{LupA}. It remains to prove that the superposition operator is demicontinuous.\\
 Hence, let $\{u_m\}\subset L^2 ([0,T],H)$ be a sequence which converges strongly to $u,$ then for a.e. $t\in [0,T]$ it holds $u_m(t) \to u(t)$ in $H$  and $f(t,u_m(t))\wto f(t,u(t))$ follows by demicontinuity assumption on $f.$\\
On the hand, we observe that $\{N_f (u_m) \}$ is bounded in $\ldue{H}$ and, by reflexivity, we can assume it converges weakly to $v\in \ldue{H}$ up to subsequences; thus it remains to prove that $v(t)=f(t,u(t)).$\\
 Fix a countable and dense sequence $\{ e_j \}$  in $H;$ then for any $j \in \mathbb{N}$
 \[
 \langle f(t, u_m(t)), e_j \rangle_H \to \langle f(t, u(t)), e_j \rangle_H \text { as }m\to\infty
 \]
for a.e. $t\in [0,T]$ and by Egorov Theorem there exists a null set $A_j$ such that the convergence is uniform on $[0,T]\setminus A_j.$ Since $\bigcup_{j\in\mathbb{N}} A_j$ is also null, it holds for any $j\in\mathbb{N}$ that
\begin{equation}\label{nfconv}
\int_0^T | \langle f(t,u_m (t)) - f(t,u(t)), e_j \rangle_H | dt \to 0 \text { as } m\to\infty.
\end{equation}
Recalling the fact that $\{N_f (u_m) \}$ weakly converges to $v$ in $\ldue{H},$ one in particular gets
\[
\lim_{m\to\infty} \int_0^T | \langle N_f(u_m)(t) - v(t), e_j \rangle_H | dt = 0.
\]
The latter together with \eqref{nfconv} bring to $\langle f(t,u(t))-v(t), e_j \rangle_H = 0$ for any $j\in \mathbb N$ and by density $v=N_f(u)$ follows.
\end{proof}
It is worth noting that the class of demicontinuous function is wide and also includes maximally monotone operators, as we point out in Section \ref{sez6}.

\section{Existence and regularity of solutions}\label{sez5}
This section is devoted to the study on the existence of solution in $H^1 ([0,T],H) \cap \ldue{V}$ of the problem
 \begin{eqnarray}\label{PMR}
	\begin{cases}
		u'(t)+A(t)u(t)=f(t,u(t)) \qquad t\in[0,T]\ \\
		u(0)=g(u),
	\end{cases}
\end{eqnarray}
where,
\begin{enumerate}[(i)]
\item $V$ and $H$ are Hilbert spaces, with $V$ densely embedded into $H$ and the embedding $V\hookrightarrow H$ is compact.
\item $\{A(t): t\in [0,T]\}$ is generated by a sesquilinear form $a$ which satisfies (H1)-(H4) and  (S).
\item $f:[0,T]\times H \to H$ satisfies conditions (F1)-(F3) in Lemma \ref{weaktostrong} and it holds the following
\begin{itemize}
	\item[(T) ]There exists two real numbers $R_{0}>r_{0}>0$ such that for any
	$x\in H$ with $\|x\|_H \in(r_{0},R_{0})$  
	\begin{equation}
		\Re ( \langle f(t,x),x \rangle_H ) \leq0\qquad\forall t\in[0,T]. \label{eq:trasversality}
	\end{equation}
\end{itemize}
\item $g: L^2([0,T],H)\to V$ is demicontinuous, maps bounded sets into bounded sets  and the condition
\begin{equation}\label{gcond}
\| g(u) \|_H < r  \text{ whenever }\frac{\| u \|_{\ldue{H}}}{\sqrt{T}} =:r  \in (r_0  , R_0 )
\end{equation}
holds, where $r_0$ and $R_0$ are the constants from condition (T).
\end{enumerate}

\begin{thm}\label{main}
Suppose that the above conditions hold, then Problem \eqref{PMR} has a solution  $u_* \in H^1 ([0,T],H) \cap \ldue{V};$ moreover the estimate
\begin{eqnarray}\label{stimasol}
\nonumber&&\| u_* \|_{H^1 ([0,T],H)} + \| u_* \|_{\ldue{V}} + \| A(\cdot) u_*(\cdot) \|_{\ldue{H}}\\
&&\leq C_1  \left ( 2\max\{ar_0\sqrt{T}, \|b\|_{\ldue{\mathbb{R}_+}}\} + g^* \right )
\end{eqnarray}
holds, where $g^* :=\sup\{\|g(u)\|_V:\|u\|_{\ldue{H}}\leq r_0\sqrt{T}\}$ and $C_1>0$ is the constant from \eqref{stimamaxreg2}.
\end{thm}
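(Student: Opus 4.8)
The plan is to solve \eqref{PMR} by a finite dimensional reduction: for each $m\in\mathbb N$ I would produce a solution $u_m$ of the projected problem on $V_m$ via the Leray--Schauder principle (Theorem \ref{LeSc}) and then recover $u_*$ as a limit. Fix $\rho\in(r_0,R_0)$, set $D:=\{u\in\ldue{H_m}:\|u\|_{\ldue{H}}\le\rho\sqrt T\}$, and define the homotopy $S_m:[0,1]\times D\to\ldue{H_m}$ by
\[
S_m(\lambda,u)(t):=\lambda\left(E_m(t,0)\P g(u)+\int_0^t E_m(t,s)\,\P f(s,u(s))\,ds\right).
\]
By Lemma \ref{evolsystem}(iii)--(iv) and Lemma \ref{Am}, a fixed point $u=S_m(\lambda,u)$ is exactly the maximal regularity solution of $u'+A_m(t)u=\lambda\,\P N_f(u)$ with $u(0)=\lambda\,\P g(u)$, and it remains in $V_m$ by Lemma \ref{Am}; at $\lambda=1$ this is the projected version of \eqref{PMR}. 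Condition (i) of Theorem \ref{LeSc} holds trivially since $S_m(0,\cdot)\equiv 0\in\operatorname{int}(D)$.

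For complete continuity, Lemma \ref{Am} shows that $h\mapsto\int_0^tE_m(t,s)h(s)\,ds$ maps $\ldue{H_m}$ boundedly into $H^1([0,T],H_m)\cap\ldue{V_m}$, which by the Aubin--Lions Lemma (Theorem \ref{aubinlions}, with $X_0=V$, $X=X_1=H$ and the compact embedding $V\hookrightarrow H$) embeds compactly into $\ldue{H_m}$; hence $S_m$ sends bounded sets into relatively compact sets, and this same operator turns weak convergence into strong convergence. Continuity then follows from demicontinuity: if $u_k\to u$ in $\ldue{H_m}$, then $g(u_k)\wto g(u)$ in $V$ and $N_f(u_k)\wto N_f(u)$ in $\ldue{H}$ by Lemma \ref{weaktostrong}, so $\P g(u_k)\to\P g(u)$ in the finite dimensional $H_m$ while the compact integral operator upgrades $\P N_f(u_k)\wto\P N_f(u)$ to strong convergence, giving $S_m(\lambda_k,u_k)\to S_m(\lambda,u)$. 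For the a priori bound (ii), testing $u'+A_m(t)u=\lambda\P N_f(u)$ with $u(t)$ and using accretivity of $A_m(t)$, self-adjointness of $\P$ and $u(t)\in V_m$ yields
\[
\tfrac12\tfrac{d}{dt}\|u(t)\|_H^2\le\lambda\,\Re\langle f(t,u(t)),u(t)\rangle_H .
\]
By the transversality condition \eqref{eq:trasversality} the right-hand side is $\le0$ whenever $\|u(t)\|_H\in(r_0,R_0)$, so a barrier argument shows that $\|u(0)\|_H\le c<R_0$ forces $\|u(t)\|_H\le\max\{c,r_0\}$ for all $t$. If $r:=\|u\|_{\ldue H}/\sqrt T>r_0$, then $r\in(r_0,\rho]\subset(r_0,R_0)$, whence \eqref{gcond} gives $\|u(0)\|_H\le\|g(u)\|_H<r$, the barrier bound gives $\|u(t)\|_H<r$ for every $t$, and therefore $\|u\|_{\ldue H}^2<r^2T=\|u\|_{\ldue H}^2$, a contradiction. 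Thus every fixed point satisfies $\|u\|_{\ldue H}\le r_0\sqrt T<\rho\sqrt T$, so the fixed point set is bounded and misses $\partial D$, and Theorem \ref{LeSc} produces $u_m=S_m(1,u_m)$ with $\|u_m\|_{\ldue H}\le r_0\sqrt T$.

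It remains to pass to the limit. From $\|u_m\|_{\ldue H}\le r_0\sqrt T$, condition (F3) gives $\|N_f(u_m)\|_{\ldue H}\le ar_0\sqrt T+\|b\|_{\ldue{\mathbb R_+}}$ and the definition of $g^*$ gives $\|g(u_m)\|_V\le g^*$; together with the uniform Schauder bound $\sup_m\|\P\|_{\mathcal L(V)}<\infty$, estimate \eqref{stimamaxreg2} applied to $u_m$ bounds $\{u_m\}$ uniformly in $H^1([0,T],H)\cap\ldue V$. By Aubin--Lions a subsequence converges strongly in $\ldue H$ and weakly in $H^1([0,T],H)$ and $\ldue V$ to some $u_*$. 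Testing the equation against $\psi(t)\varphi_j$ for $m\ge j$, the strong $\ldue H$ convergence with Lemma \ref{weaktostrong} gives $N_f(u_m)\wto N_f(u_*)$, while $u_m\wto u_*$ in $\ldue V$ and $u_m'\wto u_*'$ in $\ldue H$ handle the linear terms, so $u_*$ solves $u_*'+\mathcal A(t)u_*=f(t,u_*)$ in $V'$; since both sides lie in $\ldue H$, this upgrades to $u_*(t)\in\mathcal D(A(t))$ and to the strong form of \eqref{PMR} (alternatively one may pass to the limit in the mild formula using Lemma \ref{lem:wmconverge}). For the nonlocal condition, the trace $w\mapsto w(0)$ is weakly continuous on $H^1([0,T],H)$, so $u_m(0)\wto u_*(0)$, while $u_m(0)=\P g(u_m)\wto g(u_*)$ by demicontinuity of $g$ and $\P\to I$, giving $u_*(0)=g(u_*)$. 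Finally, since $\|u_*\|_{\ldue H}\le r_0\sqrt T$, estimate \eqref{stimasol} is precisely \eqref{stimamaxreg2} for $u_*$ with $\|N_f(u_*)\|_{\ldue H}\le 2\max\{ar_0\sqrt T,\|b\|_{\ldue{\mathbb R_+}}\}$ and $\|g(u_*)\|_V\le g^*$.

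I expect the main obstacle to be the passage to the limit in the nonlinear and nonlocal terms, which forces the interplay between the uniform maximal regularity bound, the Aubin--Lions compactness (hence strong $\ldue H$ convergence of the $u_m$), and the demicontinuity of $N_f$ and $g$ that makes the weak limits of $N_f(u_m)$ and $g(u_m)$ identifiable; a secondary delicate point is the transversality barrier argument yielding the sharp bound $\|u_m\|_{\ldue H}\le r_0\sqrt T$, on which both the Leray--Schauder boundary condition and the final estimate \eqref{stimasol} rest.
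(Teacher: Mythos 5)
Your proposal is correct and follows the paper's overall architecture --- finite-dimensional reduction, the homotopy \eqref{S}, Leray--Schauder with compactness supplied by the uniform $H^{1}([0,T],H)$ bound and the Aubin--Lions Lemma, and a solution obtained as a limit of the projected solutions $u_m$ --- but two of your sub-arguments genuinely diverge from the paper's. First, for the Leray--Schauder boundary condition the paper integrates $\tfrac{d}{dt}\|\tilde u(t)\|_H^2$ between two points $t_1<t_2$ produced by the mean value theorem (see \eqref{diseq3}--\eqref{discentr}) and concludes only that $\|\tilde u\|_{\ldue{H_m}}<R$; your barrier/invariance argument yields the sharper bound $\|u_m\|_{\ldue{H}}\le r_0\sqrt{T}$ for \emph{every} fixed point, which lets you invoke $g^*$ directly in \eqref{stimasol} and sidesteps the paper's slightly informal closing step in which $R$ is ``chosen arbitrarily close to $r_0$'' after the solution has already been fixed. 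Second, and more substantively, you identify the limit $u_*$ through the weak (Galerkin) formulation: testing against $\psi(t)\varphi_j$ for $m\ge j$, using $u_{m_k}'\wto u_*'$ in $\ldue{H}$, $u_{m_k}\wto u_*$ in $\ldue{V}$ and $N_f(u_{m_k})\wto N_f(u_*)$, and then upgrading to the strong form because $\mathcal{A}(\cdot)u_*(\cdot)=f(\cdot,u_*(\cdot))-u_*'(\cdot)\in\ldue{H}$. The paper instead passes to the limit in the mild formula, which is exactly where the uniform convergence of the approximating evolution systems (Lemma \ref{lem:wmconverge}) and of their adjoints (Remark \ref{aggiunto}) enter. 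Your route is more elementary in that it bypasses those two approximation results at this stage (you mention the mild-formula route only as an alternative), at the modest cost of checking that $a_m(t,u_m,\varphi_j)=a(t,u_m,\varphi_j)$ for $m\ge j$ and that the span of the $\varphi_j$ is dense enough in $V$ to recover the equation against all test functions; the paper's route is what motivates Lemmas \ref{Am} and \ref{lem:wmconverge} as stand-alone contributions. Both treatments of the nonlocal condition --- your $u_{m_k}(0)=\mathbb{P}_{m_k}g(u_{m_k})\wto g(u_*)$ via demicontinuity of $g$ and weak continuity of the trace, versus the paper's adjoint computation for $E_{m_k}(t,0)\mathbb{P}_{m_k}g(u_{m_k})$ --- are sound.
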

\begin{proof}
 Let $m\in \mathbb{N}$  and $R\in (r_0\sqrt{T},R_0\sqrt{T});$ we will denote by $\bar{B}_{R,m}\subset L^2([0,T],H_m)$ the set
$$
 \bar{B}_{R,m}:=\left \{u\in L^2([0,T],H_m): \|u(\cdot)\|_{\ldue{H}}\leq R\right\}
$$
and let  $A_m(t)$ be the operator given in \eqref{id}. \\
For a fixed $w \in \palla,$ we will at first consider the following problem
\begin{eqnarray}\label{Pdimfin}
		\begin{cases}
			u'(t)+A_m(t)u(t)= \P f(t,w(t)), \qquad a.e. \ \ t\in[0,T]\ \\
			u(0)=\mathbb{P}_mg(w).
		\end{cases}
\end{eqnarray}
From Lemmas \ref{evolsystem}-(iv), \ref{Am} and \ref{weaktostrong} that \eqref{Pdimfin} has a unique solution in $H^1([0,T],H_m)\cap\ldue{V}\cap C([0,T],H_m)$ which can be represented by
\[
	u_w(t):=E_m(t,0)\mathbb{P}_m g(w) +\int_0^t E_m(t,s) \P N_f(w(s)) ds \qquad t\in[0,T],
\]
where $\{E_m(t,s)\}_{(t,s)\in \Delta}$ is the evolution system associated to $A_m(t)$ and $N_f$ is the superposition operator $N_f (w)(t):=f(t,w(t)).$ Also note that the estimate
  \[
 \| u_w(t)\|_{H^1([0,T],H)} \leq C ( \| \P g(w) \|_V + \| N_f w \|_{\ldue{H}})
 \]
holds true for some constant $C>0$ (see Theorem \ref{cor3}). \\
 The next step is to define the map $S:[0,1]\times \palla \to \ldue{H_m}$ by
 \begin{equation}\label{S}
 S(\lambda,w(t))= \lambda E_m(t,0)\mathbb{P}_m g(w) +\int_0^t \lambda E_m(t,s) \P N_f(w(s)) \qquad t\in[0,T].
 \end{equation}
and to prove that $S$ satisfies the hypotheses of Theorem \ref{LeSc}.\\

Firstly observe that $S(0,x)=0\in\operatorname{int}(\palla);$ about the complete continuity of $S$, firstly let $\{(\lambda_k,w_k)\}_{k\in\mathbb{N}}\subset [0,1]\times\bar{B}_{R,m}$ be a sequence such that $\lambda_k\to \lambda_0$ and $w_k\to w_0.$ \\
We show that $S(\lambda_k,w_k) \to S(\lambda_0,w_0)$ in $\ldue{H}$, as $k\to \infty$.\\
	It is not difficult to note that, for $t\in[0,T]$,
	\begin{eqnarray*}\label{diseq}
		&&\|\lambda_k E_m(t,0)\P g(w_k)-\lambda_0E_m(t,0)\P g(w_0)\|_{H}\\
		&\leq&|\lambda_k-\lambda_0|\|E_m(t,0)\P g(w_0)\|_{H}+\lambda_k\|E_m(t,0)\P (g(w_k)-g(w_0))\|_{H}.
	\end{eqnarray*}
The latter implies that
\begin{eqnarray*}
	&& \|\lambda_k E_m(\cdot,0)\P g(w_k)-\lambda_0E_m(\cdot,0)\P g(w_0)\|_{\ldue{H_m}} \\
	& \leq & C( |\lambda_k-\lambda_0| \| g(w_0)\|_H + \| \P g(w_k) - \P g(w_0)\|_H ).
\end{eqnarray*}
	Since $\P$ is weak-to-strong continuous, while $g$ is demicontinuous, we derive that $\P g (w_{k}) \to \P g (w_{0}) $, as $k\to \infty.$
	We have then proved that
\begin{equation}\label{cont1}
	\lim_{k\to\infty} \lambda_k E_m(\cdot,0)\P g(w_k) = \lambda_0 E_m(\cdot,0)\P g(w_0) \text{ in } \ldue{H_m}.
\end{equation}
On the other hand, since $\P$ is weak-to-strong continuous, while $N_f$ is demicontinuous by Lemma \ref{weaktostrong}, we derive that $\P N_f (w_{k}) (s) \to \P N_f (w_{0}) (s)$, as $k\to \infty.$
Note that $w_k (s) \to w_0 (s)$ a.e. uniformly on $[0,T]$ by Egorov theorem, so that $\|w_k (s) \|_H \leq \|w_0 (s) \|_H+\varepsilon_0$ holds uniformly on $[0,T]$ for some $\varepsilon_0 >0.$ From condition (F3) and the contractivity of the evolution system, it is derived that
\begin{eqnarray}\label{diseq2}
\nonumber	&& \|E_m(t,s)\P N_f (w_{k}) (s)\|_{H} \\
		&\leq &  a  \|w_{0} (s) \|_{H} + \varepsilon_0 +b (s) \text { for a.e. }t\in [0,T],
\end{eqnarray}
with $$\Big\| a \|w_0 (\cdot)  \|_{H} + \varepsilon_0 + b(\cdot)\Big\|_{L^1([0,T],\mathbb{R}_+)} \leq C (  R+\varepsilon_0 +\|b\|_{\ldue{\mathbb{R}_+}} ).$$
Then, by Lebesgue Dominated Convergence Theorem,
	$$
	\int_0^t E_m(t,s)\mathbb{P}_mN_f (w_{k}) (s)ds\to \int_0^t E_m(t,s)\mathbb{P}_mN_f (w_{0}) (s)ds
	$$
a.e. on $t\in [0,T]$ as $k\to \infty;$ since $\lambda_k\to \lambda_0$ too, then
$$
	\lim_{k\to\infty} \lambda_k \int_0^t E_m(t,s)\mathbb{P}_mN_f (w_{k}) (s)ds  = \lambda_0  \int_0^t E_m(t,s)\mathbb{P}_mN_f (w_{0}) (s)ds
$$
for a.e. $t\in[0,T].$ Similarly as in \eqref{diseq2} it can be proved that the above convergence is dominated by the same function in $\ldue{H_m}.$ Hence,
$$
\int_0^{(\cdot)} E_m(\cdot,s)\mathbb{P}_mN_f (w_{k}) (s)ds\to \int_0^{(\cdot)} E_m(\cdot,s)\mathbb{P}_mN_f (w_{0}) (s)ds
$$
in $\ldue{H}$ as $k\to \infty$ and so
\[
\lim_{k\to\infty} S(\lambda_k,w_k) = S(\lambda_0,w_0) \text{ in }\ldue{H_m}
\]
and the continuity of the operator $S$ is proved.\\
We next show that $S$ maps bounded sets into compact ones. To this end, it is enough to show that $S([0,1],\bar{B}_{R,m})$ is relatively compact in $L^2([0,T],H_m)$.
Observe that, whenever $u\in S([0,1],\bar{B}_{R,m}),$ then it is the unique solution of
\begin{eqnarray}\label{p2}
		\begin{cases}
			u'(t)+A_m(t)u(t)= \P \lambda f(t,w(t)), \qquad a.e. \ \ t\in[0,T]\ \\
			u(0)=\mathbb{P}_m \lambda g(w)
		\end{cases}
\end{eqnarray}
for some $\lambda \in [0,1]$ and $w\in \palla.$ By Lemma  \ref{Am}, (F3) and Theorem \ref{cor3}, it is true that $u\in H^1([0,T],H_m)$ and that
\begin{eqnarray}\label{limitatH1}
\nonumber \| u \|_{H^1([0,T],H_m)} & \leq & C_1( \| \P g(w) \|_V + a \| w \|_{\ldue{V_m}} + \|b\|_{\ldue{\mathbb{R}_+}} ) \\
\nonumber & \leq & C_1 (  \sup\{ \| g(w)\|_V:\|w\|_{\ldue{H}}\leq R\} + 1+ a R + \| b \|_{\ldue{\mathbb{R}_+}} ) \\
&< &\infty
\end{eqnarray}
since $g$ maps bounded sets into bounded sets and where we supposed for simplicity that $\| g(u)-\P g(u)\|_V<1.$ \\ This last fact shows that $S([0,1],\bar{B}_{R,m})\subset H^1([0,T],H_m)$ is uniformly bounded and hence relatively compact in $L^2([0,T],H_m)$, by Theorem \ref{aubinlions}.\\
In order to apply Theorem \ref{LeSc} it remains to prove that the set $\{ u\in \palla: S(\lambda, u)=u \text { for some } \lambda\in [0,1]  \}$ is bounded and has no intersection with the boundary of $\palla.$ To this end, fix $\lambda \in (0,1)$ and suppose that $\tilde{u}=S(\lambda , \tilde{u})$ for some $\tilde{u}$ with $\| \tilde{u} \|_{\ldue{H_m}}=R\in(r_0 \sqrt{T}, R_0 \sqrt{T}).$ Note that \eqref{gcond} implies that \[
\| \tilde{u}(0) \|_H^2\leq \|g(\tilde {u})\|^2_H< \frac{R^2}{{T}} = \frac{1}{{T}} \| \tilde{u} \|^2_{\ldue{H_m}}.
\]
Since $\tilde{u}$ is continuous and by the mean value theorem, two points $t_1, t_2 \in (0,T]$ must exist such that
\begin{equation}\label{diseq3}
 r_0^2 < \|\tilde{u}(t_1) \|^2_{H} < \|\tilde{u}(t_2 ) \|^2_{H}= \frac{1}{{T}} \|\tilde{u}\|^2_{\ldue{H_m}}<R_0^2.
 \end{equation}
Note that condition (T) implies that for any $t\in [t_1,t_2]$
\begin{equation}\label{condt}
 \langle \P N_f (\tilde{u}) (t), \tilde{u}(t) \rangle_{H}   = \langle f(t,\tilde{u}(t)), \P \tilde{u}(t) \rangle_{H}  = \langle f(t,\tilde{u}(t)), \tilde{u}(t) \rangle_H\leq 0.
\end{equation}
Since $A_m$ is accretive, \eqref{diseq3} holds and by \eqref{condt}, the contradiction
\begin{eqnarray}\label{discentr}
\nonumber 0 & < & \frac { \| \tilde{u} (t_2) \|_{H}^2 - \| \tilde{u} (t_1) \|_{H}^2 }{2}  =\int_{t_1}^{t_2} \frac{d}{dt} \left(\frac {1}{2} \| \tilde{u}(t) \|^2\right) dt\\
\nonumber & = &  \int_{t_1}^{t_2}  \Re (\langle \tilde{u}'(t), \tilde{u}(t) \rangle_H) dt  =  \int_{t_1}^{t_2} - \Re (\langle A_m(t)\tilde{u}(t), \tilde{u}(t) \rangle_H)dt \\
&& +\lambda \int_{t_1}^{t_2}  \Re ( \langle \P N_f (\tilde{u}) (t), \tilde{u}(t) \rangle_H )dt \leq 0
\end{eqnarray}
arises. We have then proved that $\| \tilde{u} \|_{\ldue{H_m}}<R$  and then $S(1,\cdot)$ has a fixed point by Theorem \ref{LeSc}, that is there exists a solution $u_m$ to the problem
\begin{eqnarray}\label{Pm}
		\begin{cases}
			u'(t)+A_m(t)u(t)= \P f(t,u(t)), \qquad a.e. \ \ t\in[0,T]\ \\
			u(0)=\mathbb{P}_mg(u);
		\end{cases}
\end{eqnarray}
moreover, $u_m \in H^1([0,T],H_m)\cap\ldue{V}\cap C([0,T],H).$\\
Consider the sequence $\{ u_m \}_{m\in\mathbb{N}} \subset \ldue{H}$ and note that it is bounded in $H^1([0,T],H)$ since the bound in \eqref{limitatH1} does not depend on $m\in\mathbb{N}.$ Then by Theorem \ref{aubinlions} a subsequence $\{u_{m_k}\}_{k\in\mathbb{N}}$ exists which converges to a point  $u_*\in \ldue{H}.$ We may also assume  that $u_{m_k}(t)\to u_*(t)$ as $k\to \infty$ for a.e. $t\in [0,T].$\\

By passing to a further subsequence, it can be seen that for any fixed $t\in[0,T]$
\begin{equation}\label{conv1}
\int_0^t E_{m_k}(t,s)\mathbb{P}_{m_k} f(s,u_{m_k}(s)) - E(t,s) f(s,u_*(s))ds \to 0.
\end{equation}
Indeed, for a fixed $t\in [0,T],$ consider \[ y_{m_k}(t):= \int_0^t E_{m_k}(t,s)\mathbb{P}_{m_k} f(s,u_{m_k}(s))-  E(t,s)f(s,u_*(s))  ds \] and note that $\{ y_{m_k} \}_{k\in\mathbb{N}} $ can be seen as a bounded sequence in $H^1 ([0,T],H)$ by Lemma \ref{evolsystem}-(iv) and since $H^1 ([0,T],H_{m_k})\hookrightarrow H^1 ([0,T],H).$ Reasoning as before and passing to further subsequences if necessary, it is readily proved that $y_{m_k}(t)\to y_*(t)$ as $k\to\infty.$ \\
On the other hand, fix $x\in H$ and $(t,s)\in \Delta$  note that,
\begin{eqnarray}\label{conv2}
\nonumber &&\langle E_{m_k}(t,s)\mathbb{P}_{m_k} f(s,u_{m_k}(s)) - E(t,s) f(s,u_*(s)), x \rangle_H \\
\nonumber &&=\langle  E_{m_k}(t,s)\mathbb{P}_{m_k} ( f(s,u_{m_k}(s)) - f(s,u_*(s))), x \rangle_H\\
\nonumber &&\phantom{=} + \langle (  E_{m_k}(t,s)\mathbb{P}_{m_k} - E(t,s) )f(s,u_*(s)),x \rangle_H\\
\nonumber &&=\langle  f(s,u_{m_k}(s)) - f(s,u_*(s)), E(t,s)^* x \rangle_H \\
\nonumber && \phantom{=} + \langle   f(s,u_{m_k}(s)) - f(s,u_*(s)) ,(E_{m_k}(t,s)\mathbb{P}_{m_k})^* x  - E(t,s)^* x \rangle_H\\
&&\phantom{=} + \langle (  E_{m_k}(t,s)\mathbb{P}_{m_k} - E(t,s) )f(s,u_*(s)),x \rangle_H.
\end{eqnarray}
Let $k\to\infty$ and observe that
 $$\langle  f(s,u_{m_k}(s)) - f(s,u_*(s)), E(t,s)^* x \rangle_H \to 0$$ since $f(s,\cdot)$ is demicontinuous, while  $$\langle   f(s,u_{m_k}(s)) - f(s,u_*(s)) ,(E_{m_k}(t,s)\mathbb{P}_{m_k})^* x  - E(t,s)^* x \rangle_H\to 0$$ by Remark \ref{aggiunto} and since $ f(s,u_{m_k}(s)) - f(s,u_*(s))$ is bounded; lastly, $$\langle (  E_{m_k}(t,s)\mathbb{P}_{m_k} - E(t,s) )f(s,u_*(s)),x \rangle_H\to 0$$ by Lemma \ref{lem:wmconverge}.\\
 Then \eqref{conv2} implies that \[ \lim_{k\to\infty} \langle E_{m_k}(t,s)\mathbb{P}_{m_k} f(s,u_{m_k}(s)) - E(t,s) f(s,u_*(s)), x \rangle_H = 0.\]
Also, it is easily seen that for any fixed $t\in[0,T]$ and any $s\in [0,t],$
\begin{eqnarray*}
 | \langle E_{m_k}(t,s)\mathbb{P}_{m_k} f(s,u_{m_k}(s)) - E(t,s) f(s,u_*(s)), x \rangle_H | & \leq & \|x\|+(1+a^2)\| x\|^2\\
 && +a^2 \|u(s)\|_H^2+|b(s)|^2
\end{eqnarray*}
where we have supposed that $\| u_{m_k}(s) \|_H \leq 1+\|u_*(s)\|_H$ a.e. uniformly on $[0,t]$ and for $k$ large enough. Since the last term of the inequality lies in $L^1([0,t],H),$ the Lebesgue's Dominated Convergence Theorem implies that
\begin{eqnarray*}
&& \langle y_{m_k} (t), x \rangle_H =\left \langle \int_0^t E_{m_k}(t,s)\mathbb{P}_{m_k} f(s,u_{m_k}(s))-  E(t,s)f(s,u_*(s))   ds , x \right \rangle_H\\
&&= \int_0^t   \langle E_{m_k}(t,s)\mathbb{P}_{m_k} f(s,u_{m_k}(s))-  E(t,s)f(s,u_*(s)),x\rangle_H ds \to 0 \text{ as }k\to\infty,
\end{eqnarray*}
where we have used \cite[Proposition 23.9]{Ze}.\\
By uniqueness of the weak limit and since $x\in H$ is arbitrary, it follows that $y_* = 0$ and hence \eqref{conv1} is proved.\\

Further we note that
\begin{eqnarray}\label{conv3}
	\nonumber  \| E_{m_k}(t,0) \mathbb{P}_{m_k} g(u_{m_k}) - E(t,0)g(u_*)\|_H & \leq & \| E_{m_k}(t,0) \mathbb{P}_{m_k}(g(u_{m_k})-g(u_*))\|_H \\
	\nonumber && + \| (E_{m_k}\mathbb{P}_{m_k} (t,0) -  E(t,0))g(u_*)\|_H
\end{eqnarray}
and by Lemma \ref{lem:wmconverge}, $\| (E_{m_k} (t,0) \mathbb{P}_{m_k}-  E(t,0))g(u_*)\|_H\to 0$ as $k\to \infty$. In order to prove that $\| E_{m_k}(t,0) \mathbb{P}_{m_k}(g(u_{m_k})-g(u_*))\|_H\to 0$, as above let us introduce
$$
z_{m_k}(t):= E_{m_k}(t,0) \mathbb{P}_{m_k}(g(u_{m_k})-g(u_*))
$$
that is a bounded sequence in $H^1 ([0,T],H)$. Passing to further subsequences if necessary, it is readily proved that $z_{m_k}(t)\to z_*(t)$ as $k\to\infty.$
On the other hand, fix $x\in H$ and $(t,s)\in \Delta$  and note that,
\begin{eqnarray*}
	\nonumber\left \langle z_{m_k}, x \right \rangle_H & =& \langle  g(u_{m_k})-g(u_*), (E_{m_k}(t,0) \mathbb{P}_{m_k})^* x \rangle_H \\
	\nonumber &=& \langle    g(u_{m_k})-g(u_*),(E_{m_k}(t,0)\mathbb{P}_{m_k})^* x- E(t,0)^* x \rangle_H\\
	&&+ \langle  g(u_{m_k})-g(u_*),E(t,0)^*x \rangle_H.
\end{eqnarray*}
When $k\to\infty$ we observe that $\langle  g(u_{m_k})-g(u_*),E(t,0)^*x \rangle_H \to 0$ since $g(\cdot)$ is demicontinuous, while $$\langle g(u_{m_k})-g(u_*),(E_{m_k}(t,0)\mathbb{P}_{m_k})^* x- E(t,0)^* x \rangle_H\to 0$$ by Remark \ref{aggiunto} and since $g(u_{m_k})-g(u_*)$ is bounded. This proves that $\langle z_{m_k}(t),x\rangle_H \to 0$, as $k\to \infty$ and so $z_*=0$.

The fact that $g(u_{m_k})\to g(u_*)$ as $k\to \infty$ follows by arguing as before.
Putting all together, we have then proved that for any $t\in[0,T],$
\begin{eqnarray*}
u_* (t) & = & \lim_{k\to\infty} u_{m_k} (t)\\
&=& \lim_{k\to\infty}  E_{m_k}(t,0)\mathbb{P}_{m_k} g(u_{m_k}) + \int_0^t E_{m_k}(t,s)\P f(s,u_{m_k}(s))ds \\
&=& E(t,0)g(u_*) +  \int_0^t E(t,s)f(s,u_*(s))ds.
\end{eqnarray*}
Hence, by Lemma \ref{evolsystem}, $u_*\in H^1 ([0,T],H) \cap \ldue{V} $ solves Problem \eqref{PMR}. Let $g^*_R :=\sup\{\|g(u)\|_V:\|u\|_{\ldue{H}}\leq R \sqrt{T}\}$; it holds
\begin{eqnarray*}
&&\| u_* \|_{H^1 ([0,T],H)} + \| u_* \|_{\ldue{V}} + \| A(\cdot) u_*(\cdot) \|_{\ldue{H}} \\
&&\leq C_1 \left ( \| N_f (u_*) \|_{\ldue{H}}  + \| g(u_*) \|_V \right ) \\
&&\leq C_1 \left ( 2\max\{a\|u_*\|_{\ldue{H}}, \|b\|_{\ldue{\mathbb{R}_+}}\} + g^*_R \right )\\
&&\leq C_1  \left ( 2\max\{aR, \|b\|_{\ldue{\mathbb{R}_+}}\} + g^*_R \right ).
\end{eqnarray*}
To reach the estimate \eqref{stimasol}, we observe that in the latter inequality, $R$ can be chosen to be arbitrarily close to $r_0.$
\end{proof}
\begin{cor}\label{maincor}
Let $\{A(t): t\in [0,T]\}$ be generated by a sesquilinear form $a(\cdot,\cdot,\cdot)$ which satisifies (H1)-(H3) and (H4$^*$). Moreover, suppose that (S) holds and that $f[0,T]\times H \to H$ satisfies conditions (F1)-(F3) in Lemma \ref{weaktostrong} for some $b\in L^\infty ([0,T],\mathbb{R}_+)$.
Suppose also that $g$ is demicontinuous and such that $ g_*:=\sup \{ \|g(u)\|_V : u\in \ldue{H}\}<+\infty$ holds.
If $V$ is compactly embedded in $H$ then problem
 \begin{eqnarray}\label{PC}
		\begin{cases}
			u'(t)+A(t)u(t)= f(t,u(t)), \qquad a.e. \ \ t\in[0,T]\ \\
			u(0)= g(u)
		\end{cases}
\end{eqnarray}
admits a solution in $H^1 ([0,T],H) \cap \ldue{V}$
\end{cor}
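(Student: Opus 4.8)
The plan is to reduce Corollary \ref{maincor} to an application of Theorem \ref{main}, verifying that the weakened hypotheses (H1)-(H3), (H4$^*$), (S), together with the global boundedness of $g$ and the $L^\infty$ bound on $b$, still place us in the setting where the main theorem applies. The first observation is that Theorem \ref{main} was stated under (H3) and (S), but the Remarks following Theorem \ref{cor3} show precisely how to descend to (H3$^*$) and (S$^*$): here, however, we are given the genuine (H3) together with (H4$^*$) in place of (H4), and (S) holds outright, so the only substantive point is to confirm that (H4$^*$) can be used where (H4) was invoked. Since (H4$^*$) is stated for the interpolation norm $\|v\|_{V_\gamma}$ with $\gamma\in(0,1)$ and $V_\gamma=[H,V]_\gamma$, the Remark containing (S$^*$) tells us that under (H4$^*$) the square-root property at a single time propagates; combined with the assumed (S), all the maximal-regularity machinery of Theorem \ref{cor3} and the evolution-family construction of Lemma \ref{evolsystem} remain available. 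Thus the finite-dimensional reduction of Section \ref{sez3} and the convergence Lemmas \ref{Am} and \ref{lem:wmconverge} go through verbatim.

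The key step is then to check the transversality condition (T) and the nonlocal condition \eqref{gcond}, which are the only hypotheses of Theorem \ref{main} not directly assumed in the Corollary. Here I would exploit the global bound $g_*=\sup\{\|g(u)\|_V:u\in\ldue{H}\}<+\infty$ together with $b\in L^\infty([0,T],\mathbb{R}_+)$. The idea is that these global bounds let us bypass the need for a genuine transversality inequality: because $g$ is uniformly bounded in $V$, hence in $H$ via the continuous embedding, one can choose $r_0>0$ large enough that $\|g(u)\|_H<r_0$ holds for every $u$, so that condition \eqref{gcond} is satisfied trivially once $R_0>r_0$ is taken large as well. The transversality condition (T) is the delicate one: rather than assuming \eqref{eq:trasversality}, I would instead adapt the a priori estimate \eqref{discentr} directly, using the global $L^\infty$ bound on $b$ and the growth condition (F3) to obtain a uniform a priori bound on any fixed point $\tilde u=S(\lambda,\tilde u)$ independent of $m$. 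Concretely, pairing the equation \eqref{Pm} with $u$ and using accretivity of $A_m$ together with (F3) yields a Gronwall-type estimate of the form $\tfrac{d}{dt}\|u(t)\|_H^2\le C(\|u(t)\|_H^2+b(t)^2)$, and the $L^\infty$ bound on $b$ plus the bound on the initial datum $\|\P g(u)\|_H\le g_*\,\mathrm{const}$ gives an a priori bound $\|u\|_{\ldue{H_m}}\le R$ with $R$ independent of $m$.

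Once a uniform a priori bound $R$ is secured, the Leray-Schauder argument of Theorem \ref{main} can be run on the ball $\palla$ with this $R$, producing finite-dimensional solutions $u_m$ to \eqref{Pm} whose $H^1([0,T],H)$-norms are bounded independently of $m$ by \eqref{limitatH1}; here the compact embedding $V\hookrightarrow H$ and the Aubin-Lions Lemma (Theorem \ref{aubinlions}) furnish a convergent subsequence, and the passage to the limit is carried out exactly as in the proof of Theorem \ref{main}, using demicontinuity of $f(t,\cdot)$ and of $g$, the adjoint convergence of Remark \ref{aggiunto}, and Lemma \ref{lem:wmconverge}. The limit $u_*$ then satisfies the integral representation $u_*(t)=E(t,0)g(u_*)+\int_0^t E(t,s)f(s,u_*(s))\,ds$ and, by Lemma \ref{evolsystem}, lies in $H^1([0,T],H)\cap\ldue{V}$ and solves \eqref{PC}.

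I expect the main obstacle to be the verification that condition (T) can be dispensed with: in Theorem \ref{main}, (T) is what forces the fixed-point set away from $\partial\palla$ via the strict sign in \eqref{discentr}, and removing it requires replacing that sign argument with a quantitative a priori estimate that controls $\|\tilde u\|_{\ldue{H_m}}$ uniformly in $m$. The delicate point is to ensure the resulting bound $R$ genuinely does not depend on $m$, which relies crucially on the global (rather than merely local) boundedness of $g$ in $V$ and on $b\in L^\infty$ rather than only $L^2$; both of these are exactly the strengthened hypotheses in the Corollary, so the argument should close, but care is needed to track constants through the Gronwall estimate so that the radius of the ball on which Leray-Schauder is applied is fixed before letting $m\to\infty$.
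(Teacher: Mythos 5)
Your plan is correct, but it diverges from the paper's proof at the decisive step. The paper does not dispense with the transversality condition (T); it \emph{manufactures} it. Following Remark \ref{trucco}, one sets $v(t)=e^{-\mu t}u(t)$ with $\mu=\delta+\epsilon$ and $\epsilon>a$, turning \eqref{PC} into an equivalent problem governed by $\hat A(t)=A(t)+\delta I$, $\hat g(v)=g(e^{\mu\cdot}v(\cdot))$ and $\hat f(t,x)=e^{-\mu t}f(t,e^{\mu t}x)-\epsilon x$. The extra term $-\epsilon\|x\|_H^2$ dominates the growth permitted by (F3), giving $\Re\langle\hat f(t,x),x\rangle_H\le\left((a-\epsilon)\|x\|_H+\|b\|_\infty\right)\|x\|_H\le 0$ whenever $\|x\|_H>\|b\|_\infty/(\epsilon-a)$; hence (T) holds for $\hat f$ with $r_0>\max\{\sqrt{T}g_*,\|b\|_\infty/(\epsilon-a)\}$ and $R_0=+\infty$, condition \eqref{gcond} holds because $\hat g$ is globally bounded, and Theorem \ref{main} is then invoked verbatim on the transformed problem. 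You instead keep the original equation and replace the boundary-avoidance step of the Leray--Schauder argument by a direct Gronwall estimate on fixed points of $S(\lambda,\cdot)$: accretivity of $A_m$ and (F3) give $\frac{d}{dt}\|\tilde u(t)\|_H^2\le(2a+1)\|\tilde u(t)\|_H^2+b(t)^2$ with $\|\tilde u(0)\|_H$ controlled by $g_*$, producing a radius independent of $m$ and $\lambda$; since nothing else in the proof of Theorem \ref{main} uses (T), this closes the argument. The trade-off is that the paper's reduction reuses Theorem \ref{main} as a black box and explains why $b\in L^\infty([0,T],\mathbb{R}_+)$ is assumed (it fixes the radius beyond which (T) holds for $\hat f$), whereas your route must re-open that proof but is marginally more general, since the Gronwall step only needs $b\in \ldue{\mathbb{R}_+}$.
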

\begin{proof}
We have already seen in Remark \ref{trucco} that by setting $v(t):=e^{-\mu t} u(t)$, problem \eqref{PC} is equivalently rewritten as
\begin{eqnarray}\label{PC2}
		\begin{cases}
			v'(t)+(A(t)+\mu I)v(t)= e^{-\mu t} f(t,e^{\mu t}v(t)), \qquad a.e. \ \ t\in[0,T]\ \\
			v(0)= g ( e^{\mu \cdot} v(\cdot) ).
		\end{cases}
\end{eqnarray}
We will see that the same trick permits us to weaken the hypothesis on the nonlinear term. Indeed, choose $\mu := \delta + \epsilon,$ where $\epsilon > a$, being $a$ the constant from (F3). We rewrite \eqref{PC2} as
\begin{eqnarray}\label{PC3}
		\begin{cases}
			v'(t)+\hat A (t)v(t)= \hat{f}(t,v(t)), \qquad a.e. \ \ t\in[0,T]\ \\
			v(0)= \hat{g}(v),
		\end{cases}
\end{eqnarray}
where $\hat A(t) = A(t) + \delta I$ is accretive, $\hat{g}:v\in \ldue{H} \mapsto g( e^{\mu \cdot} v(\cdot))\in V$ is demicontinuous and bounded and $\hat{f}(t,x):=e^{-\mu t} f(t,e^{\mu t}x) - \epsilon x$ still satisfies properties (F1)-(F3).\\
To see that $\hat{f}$ also satisfies the transversality condition (T), note that for any $(t,x)\in [0,T]\times H$
\begin{eqnarray*}
&& \Re \langle \hat{f} (t,x),x \rangle_H =\Re \langle e^{-\mu t} f(t,e^{\mu t}x),x \rangle_H -\epsilon \| x\|^2 \\
&& \leq  a \| x \|^2 + e^{-\mu t}  b(t) \|x\| -\epsilon \| x\|^2\\
&& \leq \left ( (a-\epsilon)\|x\| + \| b \|_\infty \right ) \|x\|.
\end{eqnarray*}
Since $\epsilon > a,$ for any $\|x\| > \| b \|_\infty/(\epsilon - a)$ it holds
\[
	 \Re \langle\hat{f} (t,x),x \rangle_H \leq 0.
\]
In particular, condition (T) is satisfied for any $r_0> \frac{\|b\|_\infty}{\epsilon -a}.$\\
Set $r_0 > \max \left \{ \sqrt{T}g_*, \frac{\|b\|_\infty}{\epsilon -a }\right\}$ and $R_0 := +\infty,$ then $\hat{g}$ satisfies the hypotheses  of previous result, which can be followed closely to prove the existence of a solution to \eqref{PC3}.
\end{proof}

\section{Applications.}\label{sez6}
Models of non-autonomous operators $A(t)$ governed by forms satisfying assumptions (H1)-(H4) and (S) had been widely investigated in literature (see e.g. \cite{ACFP,HaOu}). Here we briefly introduce the topic.\\
Let $\Omega$ be an open connected and bounded subset of $\mathbb{R}^n$ whose boundary is a finite union of parts of rotated graphs of Lipschitz maps, i.e. a strongly Lipschitz domain. Let $H$ be $L^2(\Omega)$ with the classical Lebesgue measure, while with $H^1(\Omega)$ we denote the classical Sobolev space consisting of all square-integrable functions $u:\Omega\to \mathbb{R}$ such that $\nabla u$ exists in the weak sense and belongs to $L^2(\Omega).$ We denote by  $H_0^1(\Omega)$ the closure of $C_0^\infty(\Omega)$ in $H^1(\Omega).$  Since $\Omega$ is open and bounded, by using Poincar\'e inequality, we equip $H_0^1(\Omega)$ with the equivalent norm $\|u\|_{H_0^1(\Omega)}:=\|\nabla u\|_{L_2(\Omega)}$. In particular, by Rellich-Kondrakov's Theorem, $H_0^1 (\Omega)\hookrightarrow L^2(\Omega)$ compactly.\\
Note that $H$ is reflexive and separable, so that $L^2([0,T],H)$ is also reflexive and separable; moreover $H^1([0,T],H)$ is compactly embedded on $L^2([0,T],H)$.\\
We  recall the following result proved \cite{AuTc}, which provides a wide class of operators satisfying condition (S).
\begin{thm}\label{kspdiv} Let $\Omega\subset \mathbb{R}^n$ be a strongly Lipschitz domain and $A$ a bounded uniformly elliptic complex matrix on $\Omega$. Let $L:=-\operatorname{div}(A\nabla),$ with $L:V \to H^1(\Omega)$ and $V=H^1_0(\Omega)$ or $V=H^1(\Omega) .$ Then the domain of the maximal accretive square root $L^\frac{1}{2}$ agree with $V$ with equivalence of norms.
\end{thm}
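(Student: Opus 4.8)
The plan is to recognize Theorem~\ref{kspdiv} as the \emph{Kato square root problem} for second-order divergence-form operators with bounded measurable coefficients, subject either to Dirichlet ($V=H_0^1(\Omega)$) or Neumann ($V=H^1(\Omega)$) boundary conditions, and to prove it along the first-order (Dirac operator) lines of Axelsson--Keith--McIntosh as adapted to rough domains. Writing $L=-\operatorname{div}(A\nabla)$ with form domain $V$, the operator $L$ is maximal accretive, so $L^{1/2}$ is well defined by the holomorphic functional calculus; the theorem amounts to the single a priori estimate $\|L^{1/2}u\|_{L^2}\lesssim\|\nabla u\|_{L^2}$, since applying the same bound to the adjoint operator $L^*$, which belongs to the same class, and dualizing returns the reverse inequality $\|\nabla u\|_{L^2}\lesssim\|L^{1/2}u\|_{L^2}$ and hence the full norm equivalence. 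By McIntosh's characterisation of the bounded $H^\infty$ calculus in terms of quadratic estimates, this is in turn equivalent to a two-sided square function bound, so the problem is reduced to proving one.

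First I would encode $L$ into a perturbed bisectorial Dirac-type operator $\Pi_B=\Gamma+B_1\Gamma^*B_2$, where $\Gamma=\left(\begin{smallmatrix}0&0\\\nabla&0\end{smallmatrix}\right)$ is nilpotent and the bounded multiplication operators $B_1,B_2$ carry the coefficient matrix $A$, so that $L$ is recovered as a diagonal block of $\Pi_B^2$ after the usual identifications. Here the choice of the domain of $\Gamma$ (namely $H_0^1$ versus $H^1$) is precisely what builds in the Dirichlet or the Neumann boundary condition. In this framework the Kato estimate is equivalent to the boundedness of the $H^\infty$ functional calculus of $\Pi_B$, which reduces to the quadratic estimate
\begin{equation*}
\int_0^\infty \left\| t\Pi_B(I+t^2\Pi_B^2)^{-1}u \right\|_{L^2}^2\,\frac{dt}{t}\approx \|u\|_{L^2}^2,\qquad u\in\overline{R(\Pi_B)}.
\end{equation*}

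The core of the argument, and the main obstacle, is establishing this quadratic estimate. The strategy is to write the resolvent family $\Theta_t=t\Pi_B(I+t^2\Pi_B^2)^{-1}$, split it into a principal part acting on dyadic averages of $u$ and a remainder, and control the remainder by the off-diagonal (Gaffney--Caccioppoli) decay of the resolvents, which follows from the ellipticity of $A$. The principal part is then handled by a local $T(b)$ theorem for square functions: one constructs coefficient-adapted test sections $f_Q^w$ over the dyadic cubes $Q$, replaces the action of $\Theta_t$ on averages by a multiplication (the ``carr\'e du champ''), and reduces matters to verifying a Carleson measure bound through a stopping-time decomposition together with the John--Nirenberg inequality. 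This harmonic-analytic step is the deep heart of the Kato problem and is where essentially all the difficulty resides.

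Finally, to pass from the model half-space (or whole-space) situation to the strongly Lipschitz domain $\Omega$, I would flatten the boundary locally by bi-Lipschitz changes of variables. Such maps preserve both $H^1(\Omega)$ and $H_0^1(\Omega)$ and transform $-\operatorname{div}(A\nabla)$ into another divergence-form operator whose coefficients, produced by the Jacobian, remain bounded, measurable and uniformly elliptic; a partition of unity then glues the local estimates into a global one. As this entire program is carried out in \cite{AuTc}, it suffices to invoke that reference, the role of Theorem~\ref{kspdiv} here being only to exhibit a concrete and broad class of operators satisfying hypothesis~(S).
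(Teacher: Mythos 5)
The paper offers no proof of this statement at all: Theorem~\ref{kspdiv} is recalled verbatim from \cite{AuTc} purely as a black box whose only role is to supply a concrete class of operators satisfying the square root property (S), and your proposal, which ends by invoking that same reference, is therefore entirely consistent with what the paper does. Your surrounding sketch of the Kato square root argument is accurate in outline --- reduction by duality and the $H^\infty$ calculus to a single quadratic estimate, the splitting of $\Theta_t$ into a principal part plus an off-diagonally decaying remainder, the local $T(b)$/Carleson measure/stopping-time core, and the bi-Lipschitz flattening and partition of unity to pass from $\mathbb{R}^n$ to a strongly Lipschitz domain. One remark on the route: the first-order Dirac operator framework $\Pi_B=\Gamma+B_1\Gamma^*B_2$ of Axelsson--Keith--McIntosh that you describe postdates \cite{AuTc}, which instead localizes the second-order solution of the Kato conjecture on $\mathbb{R}^n$ (Auscher--Hofmann--Lacey--McIntosh--Tchamitchian) directly at the level of $-\operatorname{div}(A\nabla)$; both roads lead to the same quadratic estimate and share the $T(b)$ and Gaffney ingredients, so this is a difference of packaging rather than of substance. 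Since the result is deep, imported, and not re-proved in the paper, deferring to the reference is exactly the right level of detail here; just be aware that citing the AKM formalism as the proof \emph{in} \cite{AuTc} would be historically inaccurate.
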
\ \\
To provide a model for sesquilinear forms we set $V:=H^1(\Omega)$ and define 
\begin{equation}\label{formsesq}
a(t,u,v) :=\int_\Omega\sum_{i,j=1}^{n} a_{i,j}(t,x)\partial_iu \ \overline{\partial_jv} \ \d x,
\end{equation}
then $a:[0,T]\times V \times V \to \mathbb{C}.$
The assumptions on $a$ are the following:
\begin{description}
	\item [{(a)}] Let $a_{i,j}(\cdot,\cdot)\in L^\infty([0,T]\times \Omega,\mathbb{R})$ ($i,j=1,\dots,n$)
   satisfying:
	\begin{description}
		\item [{(a1)}] the (uniform) ellipticity condition $$  \sum_{i,j=1}^{n} a_{i,j}(t,x)\xi_i\xi_j  \geq \nu \|\xi\|^2, \qquad \forall \xi\in \mathbb{R}^n, \ a.e.  \ (t,x)\in [0,T]\times \Omega,
		$$
		where $\nu>0$, holds.
		\item [{(a2)}] There exists $K>0$ such that for any $i,j$,
		$$
		|a_{i,j}(t_1,\xi)-a_{i,j}(t_2,\xi)|\leq K |t_1-t_2|^{\alpha}
		$$
		a.e. $\xi \in \Omega$, $t_1,t_2\in[0,T]$ and $\displaystyle \alpha>\frac{1}{2}$
	\end{description}
\end{description}
\begin{prop}\label{Aesempio}
The sesquilinear form $a(t,u,v)$ satisfies assumptions $(H1)-(H4)$ and $(S).$
\end{prop}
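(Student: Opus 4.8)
The plan is to verify the five conditions one at a time, reading each off from the corresponding structural hypothesis on the coefficient matrix $(a_{i,j})$. Throughout I write $u = u_1 + i u_2$ for the real and imaginary parts of a generic $u\in V$, and I use that $\|\nabla u\|_{L^2(\Omega)} \le \|u\|_V$ since $V = H^1(\Omega)$.

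Condition (H1) is the mildest: each $a_{i,j}\in L^\infty([0,T]\times\Omega)$ is jointly measurable, and for fixed $u,v\in V$ the products $\partial_i u\,\overline{\partial_j v}$ lie in $L^1(\Omega)$, so the integrand $(t,x)\mapsto a_{i,j}(t,x)\partial_i u(x)\overline{\partial_j v(x)}$ is measurable and integrable in $x$; Fubini's theorem then renders $t\mapsto a(t,u,v)$ measurable. For (H2) I would bound $|a(t,u,v)|$ by $\max_{i,j}\|a_{i,j}\|_\infty$ times $\sum_{i,j}\int_\Omega|\partial_i u|\,|\partial_j v|$, apply Cauchy--Schwarz to each summand together with the equivalence of $\ell^1$ and $\ell^2$ norms on $\mathbb{R}^n$, and reach $|a(t,u,v)|\le M\|u\|_V\|v\|_V$ with $M$ depending only on $n$ and the $L^\infty$ bounds.

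For the coercivity I would expand the real part of $\sum_{i,j}a_{i,j}\partial_i u\,\overline{\partial_j u}$; since the $a_{i,j}$ are real, this equals $\sum_{i,j}a_{i,j}(\partial_i u_1\partial_j u_1 + \partial_i u_2\partial_j u_2)$, and applying the ellipticity bound (a1) to the real vectors $\nabla u_1$ and $\nabla u_2$ yields $\Re(a(t,u,u)) \ge \nu\|\nabla u\|_{L^2}^2$. This is the step that needs care: on $V = H^1(\Omega)$ the gradient controls only a seminorm, so the inequality $\nu\|\nabla u\|_{L^2}^2 \ge \nu\|u\|_V^2 - \nu\|u\|_H^2$ delivers the relaxed coercivity (H3$^*$) rather than (H3) itself; by Remark \ref{trucco} this is precisely what is needed, while on $V = H_0^1(\Omega)$ (equipped with the gradient norm via Poincar\'e) it coincides with (H3) directly. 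Condition (H4) is then read off from the H\"older-in-time hypothesis (a2): subtracting the two forms and using $|a_{i,j}(t,x)-a_{i,j}(s,x)|\le K|t-s|^\alpha$ together with the same Cauchy--Schwarz estimate as in (H2) gives $|a(t,u,v)-a(s,u,v)|\le \omega(|t-s|)\|u\|_V\|v\|_V$ with the nondecreasing modulus $\omega(r) = CK r^\alpha$.

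It remains to check the two integral requirements in (H4) and the square-root property (S). The integrals $\int_0^T \omega(t)/t^{3/2}\,dt$ and $\int_0^T(\omega(t)/t)^2\,dt$ reduce to $\int_0^T t^{\alpha-3/2}\,dt$ and $\int_0^T t^{2\alpha-2}\,dt$, both of which converge at the origin exactly when $\alpha > 1/2$, so this is precisely where the restriction on the H\"older exponent enters. Finally, for (S) I would fix $t\in[0,T]$ and note that the operator $A(t)$ associated with $a(t,\cdot,\cdot)$ is, after integration by parts, the divergence-form operator $-\operatorname{div}(A(t)\nabla)$ with the boundary conditions encoded by $V$; since the matrix $A(t) = (a_{i,j}(t,\cdot))$ is bounded (by the $L^\infty$ bound) and uniformly elliptic (by (a1)), Theorem \ref{kspdiv} gives $\mathcal{D}(A(t)^{1/2}) = V$ with equivalence of norms, and as this holds for every $t$ property (S) follows. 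The main obstacle is the coercivity step, both in handling the complex structure correctly through the real and imaginary parts and in recognising that on $H^1(\Omega)$ one only obtains (H3$^*$) and must therefore appeal to Remark \ref{trucco}.
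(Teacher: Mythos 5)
Your verification follows the same route as the paper's: (H1) by measurability of the integrand, (H2) by Cauchy--Schwarz, (H4) with $\omega(t)=CKt^{\alpha}$ (you additionally check the two integral conditions, which the paper leaves implicit but which indeed reduce to the requirement $\alpha>1/2$ of (a2)), and (S) by identifying $A(t)$ with the divergence-form operator $-\operatorname{div}(A(t)\nabla)$ and invoking Theorem \ref{kspdiv}. The one point where you genuinely depart is (H3): the paper simply quotes the energy estimate $\alpha\|u\|_{V}^{2}\le a(t,u,u)$ from \cite{AuTc} and declares (H2)--(H3) proved, whereas you observe that on $V=H^{1}(\Omega)$ the form only controls the gradient seminorm, so that nonzero constants defeat (H3) as literally stated and only the G\aa{}rding-type inequality (H3$^*$) is available, to be repaired via Remark \ref{trucco}. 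Your reading is the more careful one --- it is consistent with the paper's own later appeal to ``uniform quasi-coercivity'' in the (S) step, and it costs nothing since Remark \ref{trucco} was introduced precisely for this purpose (on $H^{1}_{0}(\Omega)$ with the gradient norm the distinction disappears, as you note). The remaining details you supply --- splitting into real and imaginary parts and applying the ellipticity bound (a1) to $\nabla u_{1}$ and $\nabla u_{2}$ separately --- are a correct expansion of what the paper compresses into a citation.
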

\begin{proof}
While $(H1)$ is directly obtained, $(H2)$ and $(H3)$ derives from  the following energy estimate (see \cite{AuTc}) which holds in our setting:
$$
\alpha \|u|^2_{V}\leq a(t,u,u)
$$
for $u\in V$, a.e. in $t\in[0,T]$  and
\begin{equation}\label{qc}
|a(t,u,v)|\leq M \|u\|_V \|v\|_{V}
\end{equation}
for some $M,\alpha>0$ and $u,v\in V.$ At the end, note that $(H4)$ follows for $\omega(t)=Kt^\alpha$ by $(a2).$\\
Following \cite{HaOu}, define the linear operator $\mathcal{A}(t):V\to V'$ such that $\langle \mathcal{A}(t)u,v\rangle_V=a(t,u,v)$, where $\langle \cdot,\cdot\rangle_V$ denotes the usual pairing in $V'\times V$.
For each $t\in [0,T]$, the part of $\mathcal{A}(t)$ on $H$ is given by
$$
A(t):=-\sum_{i,j=1}^{n}\partial_i\left(a_{i,j}(t,\cdot)\partial_j\right)
$$
on $\mathcal{D}(A(t)):=\{u\in V:\mathcal{A}(t)u\in H\}$.\\
Fix $t\in[0,T].$ Thanks to uniform quasi-coercivity and uniform boundedness of $a(t,\cdot,\cdot)$, by Theorem \ref{kspdiv}, the square root property holds, i.e. $\mathcal{D}(A(t))^{\frac{1}{2}})=V.$
\end{proof}
To better illustrate the application range of our main results, in the sequel we analyze two settings in which Theorem \ref{main} applies. The first one regards evolution variational inequality problems.  This problem had been widely investigated in the past with a complete understanding when the problem is governed by a maximal monotone operator (\cite{Bre2}). Neverthless  it represents an interesting test-bed for our result: indeed, we show that hypotheses (F1)-(F3) on the nonlinear term are naturally satisfied by maximal monotone operators with a sublinear growth, so that existence of solutions and regularity properties derive from Theorem \ref{main}. Lastly, we focus our attention on the nonlocal initial condition $u(0)=g(u).$ Nonlocal initial boundary value problems for semilinear equations often arise in concrete phyical model and, in particular, in heat conduction or diffusion processes (\cite{BeCi}). In particular, multipoint initial conditions are used to describe the diffusion phenomenon of a small amount of gas in a transparent tube, where several consecutive measurements are more effective (\cite{Den}).
\subsection{Evolution variational inequality problems.}  
As highlighted before, an important class of evolutionary problems is represented by evolution variational inequalities. Evolution variational inequalities  have been successfully applied in several fields of science with applications to oligopolistic markets, urban transportation networks, traffic networks, international trade, agricultural and energy markets (see e.g. \cite{Du}, \cite{Fr}, \cite{Go}, \cite{Na} and references therein). 
Let $\varphi: L^2(\Omega) \to \mathbb{R}$ be a proper, convex and Gateaux differentiable function with a sublinear gradient growth, i.e. there exists $M,b>0$ such that
\[
\| \nabla \varphi (u) \|_{L^2(\Omega)} \leq M \| u \|_{L^2(\Omega)} + b,
\]
for any $u\in L^2(\Omega).$\\
An example of such functions is  given by the class $C^{1,1}_L$ of the convex and Gateaux differentiable functions with Lipschitz continuous gradient; for more details on the class and its applications to Optimization we refer the reader to \cite{Ne} and references therein.\\

Let $u_0\in H^1(\Omega)$ and $A(t)$ be as in Proposition \ref{Aesempio}; we are interested in the following evolution variational inequality problem
\begin{eqnarray}\label{varin}
\begin{cases}
\langle u'(t) + A(t)u(t), v - u(t) \rangle_{L^2(\Omega)} \geq \varphi(u(t)) - \varphi(v)\text{, for any  } v\in L_2(\Omega)\\
u(0)=u_0.
\end{cases}
\end{eqnarray}
By following \cite[pp. 893-894]{Ze2b}, we rewrite the previous inequality as
\begin{eqnarray}\label{eqvarin}
\begin{cases}
u'(t)+A(t) u(t) = -  \nabla \varphi (u(t)), \\
u(0)=u_0.
\end{cases}
\end{eqnarray}
In the sequel, we prove that the nonlinearity  $f(t,x):=-\nabla \varphi (x)$ satisfies assumptions (F1)-(F3), thus fullfilling the requests.

Observe that $\nabla \varphi$ is maximal monotone and has full domain since $\varphi$ is convex and Gateaux differentiable on $L^2(\Omega),$ which implies that $x\mapsto \nabla \varphi(x)$ is demicontinuous (see \cite[Corollary 21.21]{bauschkecombettesconvex}) and the same property is then satisfied by $f(t,\cdot).$ Condition (F3) immediately derives from the sublinearity of $\nabla \varphi.$
By setting $g(u):=u_0,$ we see that the hypotheses of Corollary \ref{maincor} are fulfilled and the following result holds.
\begin{prop}\label{prop:varin}
Problem \ref{varin} admits a solution in $H^1 ([0,T],L^2(\Omega)) \cap L^2([0,T],H^1(\Omega)).$
\end{prop}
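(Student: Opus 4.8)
The plan is to verify that Problem~\eqref{eqvarin}, once rewritten in the semilinear form $u'(t)+A(t)u(t)=f(t,u(t))$ with $f(t,x):=-\nabla\varphi(x)$ and the (trivial) nonlocal datum $g(u):=u_0$, meets every hypothesis of Corollary~\ref{maincor}, and then simply invoke that corollary. The setting is $H=L^2(\Omega)$ and $V=H^1(\Omega)$, and by Proposition~\ref{Aesempio} the form $a(t,u,v)$ in \eqref{formsesq} already satisfies (H1)--(H4) and (S); in particular $A(t)$ is accretive, so hypotheses (i)--(ii) of the main machinery hold. Moreover the embedding $H^1(\Omega)\hookrightarrow L^2(\Omega)$ is compact by Rellich--Kondrachov, so the compactness requirement of Corollary~\ref{maincor} is in place.

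The substantive work is to check conditions (F1)--(F3) for $f(t,x)=-\nabla\varphi(x)$. Since $f$ is independent of $t$, measurability in $t$ (F1) is immediate. For demicontinuity (F2) I would argue that, because $\varphi$ is convex and Gateaux differentiable with full domain $L^2(\Omega)$, its gradient $\nabla\varphi$ is maximal monotone; a maximal monotone operator defined everywhere is demicontinuous by \cite[Corollary 21.21]{bauschkecombettesconvex}, so $x\mapsto-\nabla\varphi(x)$ is demicontinuous, and $f(t,\cdot)$ inherits this for each $t$. Condition (F3), the sublinear growth bound $\|f(t,x)\|_H\le a\|x\|_H+b(t)$, is exactly the assumed sublinear gradient growth $\|\nabla\varphi(u)\|_{L^2(\Omega)}\le M\|u\|_{L^2(\Omega)}+b$, with the constant function $b\in L^\infty([0,T],\mathbb{R}_+)\subset L^2([0,T],\mathbb{R}_+)$, as required by the corollary.

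Finally I would treat the nonlocal term. Taking $g(u):=u_0$ for the fixed datum $u_0\in H^1(\Omega)=V$ gives a constant map into $V$, which is trivially demicontinuous and maps bounded sets into bounded sets; moreover $g_*:=\sup\{\|g(u)\|_V:u\in\ldue{H}\}=\|u_0\|_V<+\infty$, so the boundedness hypothesis on $g$ in Corollary~\ref{maincor} holds. With all hypotheses verified, Corollary~\ref{maincor} yields a solution $u_*\in H^1([0,T],L^2(\Omega))\cap L^2([0,T],H^1(\Omega))$ of \eqref{eqvarin}. It then remains only to observe that this solution solves the original variational inequality \eqref{varin}: by the standard subdifferential characterization used in \cite[pp.~893--894]{Ze2b}, for a convex Gateaux-differentiable $\varphi$ one has $-\nabla\varphi(u(t))=u'(t)+A(t)u(t)$ if and only if $\langle u'(t)+A(t)u(t),v-u(t)\rangle\ge\varphi(u(t))-\varphi(v)$ for all $v$, so the two formulations are equivalent and the result follows.

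The only genuine obstacle I anticipate is the demicontinuity step (F2): one must be careful that full domain plus maximal monotonicity really does force strong-to-weak continuity on all of $L^2(\Omega)$, which is precisely the content invoked from \cite{bauschkecombettesconvex}; the growth and nonlocal conditions are routine once the correct function spaces are fixed.
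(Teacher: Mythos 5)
Your proposal is correct and follows essentially the same route as the paper: both verify (F1)--(F3) for $f(t,x)=-\nabla\varphi(x)$ by noting that $\nabla\varphi$ is maximal monotone with full domain (hence demicontinuous by \cite[Corollary 21.21]{bauschkecombettesconvex}) and sublinear, take $g(u):=u_0$ as a constant (hence bounded and demicontinuous) map into $V$, and invoke Corollary \ref{maincor} after identifying the variational inequality \eqref{varin} with the equation \eqref{eqvarin} via the subdifferential characterization from \cite{Ze2b}. Your explicit remarks on the compact embedding and on $b\in L^\infty$ merely make precise what the paper leaves implicit.
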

\subsection{Semilinear equations with nonlocal initial conditions.}
As we have already pointed out, several expressions of nonlocal initial conditions had been intensively studied in literature. Among others, we cite \cite{BoPre} where  a multipoint condition had been studied under further compactness assumption,  while in \cite{PaVr}, the authors considered a general integral nonlocal initial condition of type
\[
u(0)=\int_{[0,T]} \mathcal{V}(u(t)) \d{t}
\]
has been introduced. In order to better expose the range of applications provided by Theorem \ref{main}, we will deal with an initial condition of this type.\\ As highlighted before, existing literature on the topic mainly deals with existence of mild solutions to nonlocal initial value problems. Here, we are interested in proving the existence of strong solutions and further regularities; this result can be achieved by introducing a smoothing term, below represented by the convolution with a sufficiently smooth function.\\
Let $\Omega=\mathbb{R}^n$ and let $A(t)$ be associated to the sesquilinear form given \eqref{formsesq}; then  assumptions (H1)-(H4) and (S) are still with $V=H^1 (\mathbb{R}^n)$ (see \cite{HaOu} and \cite{Pas}). 
We are interested in the following nonlocal evolution problem:
\begin{eqnarray}\label{nonlocale}
\begin{cases}
u'(t)+A(t) u(t) = f(t),\ t\in(0,1] \\
\displaystyle u(0)= \int_{I} \varphi*u(t) \d t,
\end{cases}
\end{eqnarray}
where $f\in L^\infty([0,1],\mathbb{R}),\ I=\bigcup [s_i,t_i] \subset [0,1],$ $\varphi \in C^1 (\mathbb{R}^n)$ is a mollifier with $\| \nabla \varphi \|_{L^1(\mathbb{R}^n)} < 1$ and where $\varphi * u(t)$ represents the convolution product among the two terms.\\
Note that by choosing $\varepsilon >0$ small enough, $A_\varepsilon (t):=A(t)-\varepsilon I$ still satisfies the required properties, while $f_\varepsilon (t, x) := f(t)-\varepsilon x$ fullfills requirements $(F1)-(F3)$ and the transversality condition \eqref{eq:trasversality} for any $R_0>r_0 > \| f \|_\infty.$\\
By a standard argument, it is promptly derived that $\displaystyle g(u) = \int_{I} \varphi*u(t) \d t$ sastisfies
\[
\| g(u) {\|^2}_{H^1(\mathbb{R}^n)} \leq \| \nabla \varphi \|_{L^1(\mathbb{R}^n)} \int_{[0,1]} \| u(t) \|^2_{L^2(\mathbb{R}^n)} \d t.
\]
 The latter proves that $g: L^2([0,T],{L^2(\mathbb{R}^n)}) \to H^1(\mathbb{R}^n)$ is well-defined, continuous and satisfies \eqref{gcond}. At this point, an immediate application of Theorem \ref{main} brings the following
 \begin{prop}
 A solution $u_* \in H^1 ([0,T],L^2(\mathbb{R}^n)) \cap \ldue{H^1(\mathbb{R}^n)}$ to problem \eqref{nonlocale} exists for which the a priori estimate \eqref{stimasol} holds.
 \end{prop}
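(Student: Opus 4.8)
The plan is to realise \eqref{nonlocale} as an instance of the abstract problem \eqref{PMR} with $H:=L^2(\mathbb{R}^n)$ and $V:=H^1(\mathbb{R}^n)$, and then to invoke Theorem \ref{main} through the shifted formulation of Corollary \ref{maincor}. Hypothesis (ii) of Theorem \ref{main} is furnished by Proposition \ref{Aesempio}, whose conclusions persist for $\Omega=\mathbb{R}^n$ (see the cited references). Since the source $f(t)$ does not depend on $u$, I would follow the device of Corollary \ref{maincor}: replacing $A(t)$ by $A_\varepsilon(t):=A(t)-\varepsilon I$ and $f(t)$ by $f_\varepsilon(t,x):=f(t)-\varepsilon x$ leaves $A_\varepsilon$ accretive for $\varepsilon$ small (using $\Re\langle A(t)u,u\rangle_H\geq \alpha c^{-2}\|u\|_H^2$) and manufactures the transversality condition \eqref{eq:trasversality}, since
\[
\Re\langle f(t)-\varepsilon x,x\rangle_H \leq \|f\|_\infty\|x\|_H-\varepsilon\|x\|_H^2 \leq 0
\]
as soon as $\|x\|_H\geq \|f\|_\infty/\varepsilon$; this selects the admissible window $(r_0,R_0)$. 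Conditions (F1)--(F3) for $f_\varepsilon$ are immediate: $f$ is independent of the state variable, so $f_\varepsilon(t,\cdot)$ is affine, hence continuous and \emph{a fortiori} demicontinuous, while (F3) follows from $f\in L^\infty$.

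The core of the verification concerns the nonlocal map $g(u):=\int_I \varphi*u(t)\,\d t$. Writing $\nabla(\varphi*u)=(\nabla\varphi)*u$ and applying Young's convolution inequality gives $\|(\nabla\varphi)*u(t)\|_{L^2}\leq \|\nabla\varphi\|_{L^1}\|u(t)\|_{L^2}$, and a Cauchy--Schwarz estimate over $I$ (using $|I|\leq 1=T$) then yields a bound of the form $\|g(u)\|_{H^1}\leq C\,\|u\|_{\ldue{H}}$. Consequently $g\colon \ldue{H}\to V$ is a bounded linear operator, therefore continuous---hence demicontinuous---and it maps bounded sets into bounded sets. The strict inequality $\|\nabla\varphi\|_{L^1}<1$ is exactly the ingredient that upgrades this bound to the smallness requirement \eqref{gcond}, forcing $\|g(u)\|_H<r$ whenever $\|u\|_{\ldue{H}}/\sqrt{T}=r$, which is the form demanded by hypothesis (iv).

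Once these checks are in place, all the hypotheses of Theorem \ref{main} (in the shifted formulation \eqref{PC3}) are met, and the theorem produces a solution $u_*\in H^1([0,T],L^2(\mathbb{R}^n))\cap \ldue{H^1(\mathbb{R}^n)}$; undoing the linear shift returns a solution of \eqref{nonlocale}, while the a priori estimate \eqref{stimasol} is inherited directly from the conclusion of Theorem \ref{main}.

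The step I expect to demand the most care is the compactness hypothesis (i) of Theorem \ref{main}, namely $V\hookrightarrow H$ compact: this is genuinely false for $H^1(\mathbb{R}^n)\hookrightarrow L^2(\mathbb{R}^n)$ on the unbounded domain, so the Aubin--Lions limit-passage step cannot be applied off the shelf. This is precisely the reason for building the smoothing convolution into $g$. My preferred, more robust route would be to exploit the fact that here the problem is in fact \emph{linear} in $u$ (the state dependence enters only through $g$): the solution with datum $x$ is $u_x=E_\varepsilon(\cdot,0)x+\int_0^{(\cdot)}E_\varepsilon(\cdot,s)f(s)\,\d s$, so the nonlocal condition reduces to the affine fixed-point equation $x=g(u_x)=:Lx+c$, and the contractivity $\|L\|<1$ guaranteed by $\|\nabla\varphi\|_{L^1}<1$ together with the contractivity of $E_\varepsilon$ solves it by a Neumann-series argument that bypasses compactness altogether. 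I would therefore either justify (i) via the regularising effect of $\varphi$ or, preferably, replace the compactness step by this contraction argument.
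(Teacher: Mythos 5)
Your verification of the hypotheses coincides with the paper's own argument: the paper likewise passes to $A_\varepsilon(t)=A(t)-\varepsilon I$ and $f_\varepsilon(t,x)=f(t)-\varepsilon x$ to manufacture the transversality condition \eqref{eq:trasversality}, checks (F1)--(F3) trivially since $f$ is state-independent, and bounds $g$ exactly as you do, via $\nabla(\varphi*u)=(\nabla\varphi)*u$, Young's inequality and the smallness $\|\nabla\varphi\|_{L^1(\mathbb{R}^n)}<1$ to obtain \eqref{gcond}; it then declares the conclusion ``an immediate application of Theorem~\ref{main}.'' Where you genuinely depart from the paper is in your final paragraph, and the concern you raise there is well founded: hypothesis (i) of Theorem~\ref{main} requires the embedding $V\hookrightarrow H$ to be compact, this is used in its proof through Theorem~\ref{aubinlions} to extract a strongly convergent subsequence of the Galerkin approximants $u_m$, and it fails for $H^1(\mathbb{R}^n)\hookrightarrow L^2(\mathbb{R}^n)$. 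The paper does not address this point; the convolution in $g$ regularises the initial datum but does not by itself restore the compactness needed in the limit passage. Your proposed repair --- exploiting that the equation is affine in $u$, writing $u_x=E(\cdot,0)x+\int_0^{(\cdot)}E(\cdot,s)f(s)\,\d{s}$ and solving $x=g(u_x)=Lx+c$ by a Neumann series --- is a legitimate and, for this particular problem, cleaner route: $\|L\|<1$ follows from the contractivity (indeed, by (H3), the strict exponential decay) of the evolution family together with the estimate on $g$, the fixed point lands in $V$ because $g$ takes values in $H^1(\mathbb{R}^n)$, and the regularity and the a priori bound \eqref{stimasol} then come directly from Theorem~\ref{cor3} and Lemma~\ref{evolsystem} with that datum. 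The one detail to watch is that a mollifier has $\|\varphi\|_{L^1}=1$, so the strict inequality $\|L\|_{\mathcal{L}(H)}<1$ should be extracted either from $\|\nabla\varphi\|_{L^1}<1$ (working in the $V$-norm, as in \eqref{gcond}) or from the decay $\|E(t,0)\|_{\mathcal{L}(H)}\leq e^{-\alpha c^{-2}t}$, not from Young's inequality alone. In short: your route buys a proof that does not rely on the compact embedding, which is precisely the hypothesis that the paper's direct citation of Theorem~\ref{main} leaves unjustified on $\mathbb{R}^n$.
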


\section*{Acknowledgements}
Both authors wish to thank Professor Wolfgang Arendt for some useful suggestions.

\section*{Data Availability}
Data sharing not applicable to this article as no datasets were generated or analysed during the current study.

\bibliographystyle{elsarticle-harv}

\end{document}